\documentclass[11pt]{article}
\usepackage[top=2cm,bottom=2.5cm,right=2.5cm,left=2.5cm]{geometry}
\usepackage[english]{babel}
\usepackage[utf8]{inputenc}
\usepackage{times}
\usepackage{mathtools, bm}
\usepackage{amssymb, bm}
\usepackage{graphicx}
\usepackage{mathrsfs}
\usepackage{stmaryrd}
\usepackage{amsthm}
\usepackage{listings}
\usepackage{pict2e}
\usepackage{pgf, tikz}
\usepackage{dsfont}
\usepackage{algorithm2e}
\usepackage{algorithmic}
\usepackage{multicol}
\usepackage{hyperref}

\allowdisplaybreaks

\thicklines

{
      \theoremstyle{plain}
      \newtheorem{assumption}{Assumption}
  }

\newtheorem{theorem}{Theorem}

\newtheorem{lemma}{Lemma}
\newtheorem{remark}{Remark}
\newtheorem{definition}{Definition}
\newtheorem{example}{Example}

\numberwithin{equation}{section} 
\numberwithin{lemma}{section} 
\numberwithin{remark}{section} 
\numberwithin{example}{section}
\numberwithin{corollary}{section}
\numberwithin{proposition}{section}

\date{}
\author{Pierre Le Bris\footnote{Sorbonne Université, CNRS UMR 7598, Laboratoire Jacques-Louis Lions, F-75005 Paris, France. Email : pierre.lebris[AT]sorbonne-universite.fr}$\ $ and Christophe Poquet\footnote{Université Claude Bernard Lyon 1, CNRS UMR 5208, Institut Camille Jordan, F-69622 Villeurbanne, France. Email : poquet[AT]math.univ-lyon1.fr}}
\title{A note on uniform in time mean-field limit in graphs}

\begin{document}

\maketitle

\begin{abstract}
In this article we show, in a concise manner, a result of uniform in time propagation of chaos for non exchangeable systems of particles interacting according to a random graph. Provided the interaction is Lipschitz continuous, the restoring force satisfies a general one-sided Lipschitz condition (thus allowing for non-convex confining potential) and the graph is dense enough, we use a coupling method suggested by Eberle \cite{Eberle16} known as \textit{reflection} coupling to obtain uniform in time mean-field limit with bounds that depend explicitly on the graph structure.
\end{abstract}

%
%
%
%

\section{Introduction}

%
%
%
%

\subsection{Model and motivation}

Let $N\in\mathbb{N}$ and consider an adjacency matrix $\xi^{(N)}=\left(\xi^{(N)}_{i,j}\right)_{i,j\in\left\{1,...,N\right\}}$ with coefficients $\xi^{(N)}_{i,j}\in\left\{0,1\right\}$. Denote by $G^{(N)}=(V^{(N)},E^{(N)})$ the graph associated to this adjacency matrix, in the sense ${V^{(N)}:=\{1,...,N\}}$ and $E^{(N)}:=\left\{(i,j)\in V^{(N)}\times V^{(N)}\text{ s.t.  }\xi^{(N)}_{i,j}=1\right\}$. We assume by convention that $\xi^{(N)}_{i,i}=0$ for all $i\in\left\{1,...,N\right\}$.

We will consider in this note a system of particles interacting according to this graph, more precisely the system of $N$ SDEs in $\mathbb{R}^d$
\begin{equation}\label{eq:IPS}
\tag{IPS}
    dX^i_t=F\left(X^i_t, \omega_i\right)dt+\frac{\alpha_N}{N}\sum_{j=1}^N\xi^{(N)}_{i,j}\Gamma\left(X^i_t,\omega_i,X^j_t,\omega_j\right)dt+\sqrt{2}\sigma dB^i_t,\ \ \ i\in\{1,...,N\},
\end{equation}
where $\left(B^i_\cdot\right)_{i=1,...,N}$ is a sequence of independent standard Brownian motions, $\{\omega_i\}_{i\in\{1,...,N\}}$ is a sequence of elements in $\mathbb{R}^{d'}$ (with the convention $d'=0$ if $\Gamma$ does not depend on $\omega$) which represents some environmental disorder, $(\alpha_N)_{N\geq 1}$ is a positive scaling, $F:\mathbb{R}^d\times \mathbb{R}^{d'}\mapsto\mathbb{R}^d$ is an outside force, $\Gamma:\left(\mathbb{R}^{d}\times \mathbb{R}^{d'}\right)^2\mapsto\mathbb{R}^d$ is an interaction kernel and $\sigma$ is a positive diffusion coefficient. 
We will assume that $(\omega_i)_{i=1,\ldots,N}$ is a sequence of IID random variables, and that the Brownian motions are independent from the initial condition $(X^i_0,\omega_i)_{i=1\ldots, N}$. We will denote by $\mathbb{E}$ the expectation with respect to the Brownian motions, the initial condition and the disorder.

One of the main difficulties arising in the study of this model comes from the fact that the particles are not exchangeables as, \textit{a priori}, some may interact with more particles than others. This motivates us to consider the empirical distribution, defined for $(X^1_t,...,X^N_t)$ a solution of \eqref{eq:IPS} with disorder $(\omega_1,\ldots,\omega_N)$, by
\begin{align*}
    \mu^N_t:=\frac{1}{N}\sum_{i=1}^N\delta_{(X^i_t,\omega_i)}.
\end{align*}
Notice that $\mu^N_t$ is a random variable.

We are interested in the limit, as the number $N$ of particles goes to infinity, of \eqref{eq:IPS}. Intuitively one expects the empirical measure to converge towards a measure $\bar{\rho}$ which would represent the law of one typical particle and its disorder within a cloud of interacting disordered particles. Assuming that $\frac{\alpha_N}{N}\sum_{j=1}^N\xi^{(N)}_{i,j}$ converges in some sense (given later) to a parameter $p$, this typical particle $\bar X^\omega$ with disorder $\omega$ would then in the limit evolve according to the non-linear diffusion
\begin{equation}\label{eq:NL}
\tag{NL}
    \left\{
    \begin{array}{l}
        d\bar{X}^\omega_t=F\left(\bar{X}^\omega_t,\omega\right)dt+p\int_{\mathbb{R}^d\times\mathbb{R}^{d'}}\Gamma\left(\bar{X}^\omega_t,\omega, y,\tilde{\omega}\right)\bar{\rho}_t(dy,d\tilde{\omega})dt+\sqrt{2}\sigma dB_t,   \\
        \bar{\rho}_t=\text{Law}(\bar{X}^\omega_t,\omega)
    \end{array}
    \right.,
\end{equation}
where $B$ is a standard Brownian motion. This limit was proven rigorously on finite time horizon $[0,T]$, where $T$ does not depend on $N$, under some hypotheses on the graph structure, which are in particular satisfied by sufficiently dense Erd\H{o}s-Rényi graphs \cite{DGL16,CDG20}. Our aim in the present paper is to obtain uniform in time estimates of the distance between the empirical measure $\mu^N_t$ and the limit distribution $\bar \rho_t$, with estimates that depend explicitly on the graph. 

Note that proofs of convergence of particle systems interacting via random graphs possessing a spatial structure (for example converging to a graphon) were recently obtained \cite{OR19,luccon2020quenched,bayraktar2020graphon,BW21}. 
In particular uniform in time estimates in the context of graphons were obtained in \cite{BW21}, where the empirical measure is shown to be close to the limit distribution with high probability with respect to the distribution of the random graph. In this note we aim at obtaining {\sl quenched} results, i.e. obtaining estimates that hold for almost every realization of the graph.

These recent results generalize the classical case of complete graph of interaction ($\alpha_N=1$ and ${\xi\equiv1}$) and without any dependence on the environment $\omega$, for which it is well known that under some weak conditions on $F$ and $\Gamma$ the empirical measure $\mu^N_t$ converges towards the non-linear limit $\bar{\rho}_t$ \cite{Meleard96, Sznitman91}. This phenomenon has been named \textit{propagation of chaos}, an idea motivated by Kac \cite{Kac56} : it is equivalent, in the case of exchangeable particles, to the convergence of all $k$ marginals of the law of $(X^1_t,...,X^N_t)$ to $\bar{\rho}_t^{\otimes k}$ (the non linear limit tensorized $k$ times). Thus, as $N$ goes to infinity, two particles become "more and more" independent, converging to a tensorized law, hence \textit{chaos}. The term \textit{propagation} emphasizes the fact that it is sufficient to show independence at the limit at time 0 for it to also hold true at the limit at later time $t>0$. We refer to the recent \cite{CD22-1, CD22-2}, and references therein, for a thorough reviews on propagation of chaos.

To quantify the convergence of the empirical measure towards the non-linear limit, we will use the $L^1$-Wasserstein distance defined as follows.


\begin{definition}
For $\mu$ and $\nu$  two probability measures on $\mathbb{R}^{d}$, denote by $\Pi\left(\mu,\nu\right)$ the set of couplings of $\mu$ and $\nu$, i.e. the 
set of probability measures $\pi$ on $\mathbb{R}^{d}\times\mathbb{R}^{d}$ with $\pi(A\times \mathbb{R}^{d}) = \mu(A)$ and $\pi(\mathbb{R}^{d}\times 
A ) = \nu(A)$ for all Borel set $A$ of $\mathbb{R}^{d}$. The $L^1$-Wasserstein distance is given by   
\begin{equation}\label{eq:def_wass}
\mathcal{W}_1\left(\mu,\nu\right)=\inf_{\pi\in\Pi\left(\mu,\nu\right)}\int|x-\tilde{x}|\pi\left(dxd \tilde{x} \right).
\end{equation}
\end{definition}

Equivalently, we may write in probabilistic terms
\begin{equation*}
\mathcal{W}_1\left(\mu,\nu\right)=\inf_{X\sim\mu,Y\sim\nu}\mathbb{E}\left(\left|X-Y\right|\right),
\end{equation*}
where $X\sim\mu$ is a random variable distributed according to $\mu$. This distance is a usual distance in optimal transport and in the study of measures in general, as the space of probability measures on $\mathbb{R}^d$, equipped with the $L^1$-Wasserstein distance, is a complete and separable metric space (see for instance \cite{Bolley08}). To prove the convergence in Wasserstein distance, we use a \textit{coupling method}. The idea is, instead of considering the minimum over all couplings of the law of the particle system and the non-linear limit as should be done according to \eqref{eq:def_wass}, we construct simultaneously two solutions of \eqref{eq:IPS} and \eqref{eq:NL} such that the expectation of the $L^1$ distance between these solutions tends to decrease. We would thus construct a specific coupling, that controls the $L^1$-Wasserstein distance, providing a quantitative bound. To construct this coupling, we may act on the Brownian motions and on the random variables $\omega$.

The approach we consider was motivated by the work of Eberle \cite{Eberle16}. Let us describe the idea of the coupling method. Assume, for the sake of the explanation, that $F=-\nabla U$ where $U$ is therefore a confinement potential. Constructing a solution of \eqref{eq:IPS} and $N$ independent solutions of \eqref{eq:NL} simultaneously by choosing the same Brownian motions yields the so-called \textit{synchronous} coupling, for which the Brownian noise cancels out in the infinitesimal evolution of the  difference $Z^i_t=X^i_t-\bar{X}^{\omega,i}_t$. In that case the contraction of a distance between the processes can only be induced by the deterministic drift. Such a deterministic contraction only holds under very restrictive conditions, in particular $U$ should be strongly convex. In the case of a non-convex confinement potential $U$, it is necessary to make use of the noise to obtain contraction. Constructing the solutions choosing the two Brownian motions to be antithetic (or opposite) in the direction of space given by the difference of the processes maximises the variance of the noise in the desired direction. However, \textit{a priori}, nothing ensures the noise will bring the processes closer rather than further. We thus modify the Euclidean distance by some concave function $f$, in order for a random decrease of the difference to have more effect than a random increase of the same amount.

This method was originally designed to deal with the long time behavior of general diffusion processes, as in \cite{Eberle16,EGZ19}, and later extended to show uniform in time propagation of chaos in a mean-field system in \cite{DEGZ20}. The main difference of this work when compared to \cite{DEGZ20} comes from the non-exchangeability of the particles, as we thus need careful estimates with respect to the graph. For instance, since the particles do not share a common law, we cannot restrict our analysis to the study of $\mathbb{E}[Z^1_t]$ and then conclude using the fact that all $Z^i_t$ have the same expectation ; the proof requires a more global approach to the system, by considering the empirical measure, and thus other tools.

The framework of this article was inspired by \cite{DGL16}, and we improve their result, obtaining a uniform in time estimate, while removing some of the boundedness assumptions on the various functions.

Uniform in time propagation of chaos has recently attracted a lot of attention. The ideas behind this coupling method were used to prove such estimates in a kinetic setting (i.e a particle is represented by both its position and velocity, and the Brownian motion only acts on the latter) in \cite{GLBM22, Schuh22}. In \cite{Mal01,Mal03}, uniform in time propagation of chaos was proved using synchronous coupling assuming a convexity condition on the interaction.  Likewise, a similar result was obtained in \cite{CGM08}, using functional inequalities, under some assumptions of convexity at infinity. Also using functional inequalities for mean field models developed in \cite{GLWZ22}, uniform in time propagation of chaos was proved in a kinetic setting in \cite{GLWZ21, GM21, Mon17} combining the hypocoercivity approach with uniform in the number of particles logarithmic Sobolev inequalities. Let us also mention the optimal coupling approach of \cite{Sal20} using a WJ inequality, which is also used in \cite{DT19}, which enables to recover the results in \cite{DEGZ20}. Thanks to an analysis of the relative entropy through the BBGKY hierarchy, building upon the work \cite{Lac21}, a result of uniform in time propagation of chaos was obtained, with a sharp rate in $N$, in \cite{LLF22}. Finally, in the recent work \cite{DT21},  uniform in time weak propagation of chaos (i.e observable by observable) was shown on the torus via Lions derivative. Notably, this result may extend to the case the McKean-Vlasov limit has several invariant measures, as in the Kuramoto model for instance.

All the works mentioned above assume the interaction to be "sufficiently nice" (either gradient of a convex potential, smooth, bounded, etc) and not according to a random graph. The case of singular interactions is, because of the various applications in biology, physics and others, also of great interest. Though some recent works have obtained quantitative mean-field convergence for some singular potential (for instance using entropy dissipation in \cite{JW18}, modulated energy in \cite{Ser20}, a mix of both in \cite{BJW19}, or BBGKY hierarchies in \cite{BJS22}), few still have obtained uniform in time estimates. We mention the results dealing with singular repulsive interactions of the type $-\log|x|$ or $|x|^{-s}$, $0<s<d-2$, in \cite{RS21} using the modulated energy, dealing with the specific case of the 2D vortex model in \cite{GLBM21-2} (building upon the work \cite{JW18}), or dealing with repulsive singular interactions in dimension one in \cite{GLBM22-2} using another type of coupling method.

There again, the particles are not interacting according to a graph.

%
%
%
%

\subsection{Assumptions and main result}

Denote by $d_i^{(N)}:=\sum_{j=1}^N\xi^{(N)}_{i,j}$ and $\tilde d_i^{(N)}:= \sum_{j=1}^N \xi^{(N)}_{j,i}$ the degrees of vertex $i$. The family $\xi^{(N)}$ may be deterministic or random, in this second case we assume that it is independent from the Brownian motions and from $(X^i_0,\omega_i)_{i=1,\ldots,N}$ and that the following assumption is verified almost surely. These assumptions are similar to the ones made in \cite{DGL16}.

\begin{assumption}[On the graph]\label{hyp:graph}
The adjacency matrix $\xi^{(N)}$ satisfies the following assertions for all $N\geq1$.
\begin{description}
\item[\ref{hyp:graph}-1] There exists a positive constant $C_{g}$ such that
\begin{align*}
   \limsup_{N \rightarrow \infty}\ D_{N,g} \leq C_g,
\end{align*}
where
\begin{align*}
   D_{N,g}:=\sup_{i\in \{1,\ldots,N\}} \alpha_N\left(\frac{d^{(N)}_i}{N}+\frac{\tilde d^{(N)}_i}{N}\right).
\end{align*}
\item[\ref{hyp:graph}-2]There exists $p\in[0,1]$ such that
\begin{align*}
I_{N,g}:=\sup_{i\in\{1,...,N\}}\left|\alpha_N\frac{d^{(N)}_i}{N}-p\right|\xrightarrow[N\rightarrow\infty]{a.s}0.
\end{align*}
\end{description}
\end{assumption}


\begin{example}~
\begin{itemize}
    \item Regular graphs: if $\xi^{(N)}$ defines a regular graph of degree $d_N$ with $\frac{d_N}{N} \xrightarrow[N\rightarrow\infty]{} p$, then $\xi^{(N)}$ satisfies Assumption~\ref{hyp:graph} with $\alpha_N=1$.
    \item Erd\H{o}s-Rényi graphs: Let $\xi^{(N)}_{i,j}$ be a sequence of IID Bernouilli variables of parameter $q_N$ with either $q_N \xrightarrow[N\rightarrow\infty]{} p$ or $\frac{1}{q_N} = o\left(\frac{N}{\log N}\right)$. Then $\xi^{(N)}$ satisfies Assumption~\ref{hyp:graph} with $\alpha_N=1$ in the first case, $\alpha_n=\frac{1}{p_N}$ and $p=1$ in the second case.
    \item Community models: more generally, suppose that the whole population is divided in $r$ sub-populations of size $m$ (so that $N=rm$), the graph structure being then defined by independent random variables $\xi^{(N,k,k')}_{i,j}$ for $k,k'\in \{1,\ldots, r\}$ and $i,j\in \{1,\ldots,m\}$. Suppose moreover that the intra-community interaction variables $\xi^{(N,k,k)}_{i,j}$ are of Bernoulli distribution with parameter $q_N$ satisfying $\frac{1}{q_N} = o\left(\frac{N}{\log N}\right)$, while the inter-community interaction variables $\xi^{(N,k,k')}_{i,j}$ are of Bernoulli distribution with parameter $q^{k,k'}_N$ satisfying $q^{k,k'}_N=o(q_N)$. Then, for $r$ fixed and $m\rightarrow \infty$, $\xi^{(N)}$ satisfies Assumption~\ref{hyp:graph} with $\alpha_N=\frac{1}{q_N}$ and $p=\frac{1}{r}$. For more details and the proof of this result see Appendix~\ref{sec:app}.
\end{itemize}
\end{example}


\begin{assumption}[On the restoring force]\label{hyp:F}
There exists a continuous function $\kappa:\mathbb{R}^+\mapsto \mathbb{R}$ satisfying \linebreak $\liminf_{r\rightarrow\infty}\kappa(r)>0$ such that
\begin{equation*}
    \forall x,y\in\mathbb{R}^d,\ \ \ \forall \omega\in\mathbb{R}^{d'},\ \ \ (F(x,\omega)-F(y,\omega))\cdot(x-y)\leq- \kappa(|x-y|)|x-y|^2.
\end{equation*}
In particular, this implies that there exist $M_F\geq0$ and $m_F>0$ such that
\begin{align*}
    \forall x,y\in\mathbb{R}^d,\ \ \ \forall \omega\in\mathbb{R}^{d'},\ \ \ (F(x,\omega)-F(y,\omega))\cdot(x-y)\leq M_F- m_F|x-y|^2.
\end{align*}
\end{assumption}

The added one-sided assumption on $F$ when compared to \cite{DGL16} is both classical (see \cite{DEGZ20}) and necessary to ensure that the particles tend to come back to a compact set.


\begin{example}
Let us give some examples of functions $F$ satisfying Assumption~\ref{hyp:F}. Let $F(x,\omega)=-V'(x)$ in dimension 1 with :
\begin{itemize}
    \item  $V(x)=\frac{x^2}{2}$ : then $F$ satisfies Assumption~\ref{hyp:F} with $\kappa\equiv 1$. 
    \item $V(x)=\frac{x^4}{4}-\frac{x^2}{2}$ : then
    \begin{align*}
        (F(x)-F(y))(x-y)=&-(x^3-y^3)(x-y)+(x-y)^2\\
        =&-(x-y)^2\left(x^2+xy+y^2-1\right)\\
        \leq& -(x-y)^2\left(\frac{1}{4}(x-y)^2-1\right).
    \end{align*}
    Hence,  $F$ satisfies Assumption~\ref{hyp:F} with $\kappa(x)=\frac{x^2}{4}-1$. 
\end{itemize}
Likewise, we may consider disordered restoring forces such as ${F(x,\omega)=-x^3+\omega x}$, provided $\omega$ belongs to a bounded subset of $\mathbb{R}$, or ${F(x,\omega)=-\omega x^3}$ provided $\omega$ is positive bounded from below. 
\end{example}


\begin{assumption}[On the interaction]\label{hyp:gamma_lip}
$\Gamma$ satisfies \ref{hyp:gamma_lip}-1 below, and either \ref{hyp:gamma_lip}-2 or \ref{hyp:gamma_lip}-2-bis.
\begin{description}
\item[\ref{hyp:gamma_lip}-1] $\Gamma : (x,\omega, y, \omega')\rightarrow\Gamma(x,\omega, y, \omega')$ is Lipschitz-continuous in $(x,y)$ uniformly in $\omega$ and $\omega'$: 
\begin{align*}
    \exists L_{\Gamma}\geq0,\ \forall x,y,t,s \in\mathbb{R}^d,\ \forall \omega,\omega'\in\mathbb{R}^{d'},&\\ \left|\Gamma(x,\omega,t,\omega')-\Gamma(y,\omega,s,\omega')\right|&\leq L_\Gamma \left(f(|x-y|)+f(|t-s|)\right),
\end{align*}
where $f$ is a function given below in \eqref{eq:def_f} such that $x\mapsto f(|x|)$ is equivalent to the usual $L^1$ distance in $\mathbb{R}$. 

Furthermore, for simplicity, we have $\Gamma(0,0,0,0)=0$.

\item[\ref{hyp:gamma_lip}-2] $\Gamma$ is Lipschitz-continuous in $\omega$ and $\omega'$ at $(x,y)=(0,0)$:
\begin{align*}
    \exists L_{\Gamma}\geq0,\ \forall \omega_1,\omega'_1,\omega_2,\omega'_2\in\mathbb{R}^{d'},&\\ \left|\Gamma(0,\omega_1,0,\omega'_1)-\Gamma(0,\omega_2,0,\omega'_2)\right|&\leq L_\Gamma \left(|\omega_1-\omega_2|+|\omega'_1-\omega'_2|\right).
\end{align*}

\item[\ref{hyp:gamma_lip}-2-bis] $\Gamma$ is bounded
\begin{align*}
    \exists L_{\infty}\geq0,\ \forall x,y \in\mathbb{R}^d,\ \forall \omega,\omega'\in\mathbb{R}^{d'},\ |\Gamma(x,\omega,y,\omega')|\leq L_{\infty}.
\end{align*}
\end{description}
\end{assumption}

These are usual assumptions when proving mean-field limits using coupling methods. In particular, Assumptions~\ref{hyp:F}~and~\ref{hyp:gamma_lip} imply strong existence and uniqueness for the solutions of both \eqref{eq:IPS} and \eqref{eq:NL}.


\begin{assumption}[On the initial distributions]\label{hyp:rho_0}~
\begin{description}
\item[\ref{hyp:rho_0}-1] The sequence of disorder $(w_i)_{i=1,\ldots,N}$ is IID of distribution $\nu$, satisfying
\begin{align*}
     \int_{\mathbb{R}^{d'}} \left(|\omega|^2+|F(0,\omega)|^2\right) \nu(d\omega)\leq  C_{\mathrm{dis}}.
\end{align*}
\item[\ref{hyp:rho_0}-2] The random variables $(X^i_0)_{i=1,\ldots,N}$ are exchangeable, independent from the disorder $(w_i)_{i=1,\ldots,N}$ and satisfy
\begin{align*}
    \mathbb{E}\left( |X^1_0|\right) <\infty.
\end{align*}
\item[\ref{hyp:rho_0}-3] The initial distribution $\bar \rho_0$ is a product measure with second marginal equal to $\nu$, i.e. $\bar{\rho}_0(dx,d\omega)=\bar{\rho}^1_0(dx)\nu(d\omega)$.
Moreover there exists a positive constant $\bar C$ such that
\begin{align*}
     \int_{\mathbb{R}^d} |x|^2 \bar{\rho}^1_0(dx)\leq \bar C.
\end{align*}
\end{description}
\end{assumption}

We may now state the main theorem.


\begin{theorem}\label{thm:main}
Consider Assumptions~\ref{hyp:graph},\ref{hyp:F},~\ref{hyp:gamma_lip}~and~\ref{hyp:rho_0}. There exist explicit positive constants $c_\Gamma,\tilde{C},\tilde{c}$ that do not depend on $N$ and the graph such that for all $t\geq0$ and all $N\geq 1$, provided $L_\Gamma\leq c_\Gamma/D_{N,g}$ (recall that $\limsup_{N\rightarrow \infty}D_{N,g}<C_g$),  
\begin{equation}\label{eq:main_thm}
    \mathbb{E}\mathcal{W}_1\left(\mu^N_t,\bar{\rho}_t\right)\leq \tilde{C}\left(e^{-\tilde{c}t}\mathbb{E}\mathcal{W}_1\left(\mu^N_0,\bar{\rho}_0\right)+L_\Gamma\sqrt{\frac{\alpha_N D_{N,g}}{N}}+L_\Gamma I_{N,g}+h(N)\right),
\end{equation}
where $h:\mathbb{N}\mapsto\mathbb{R}^+$ is an explicit decreasing function such that $h(N)\xrightarrow[N\rightarrow\infty]{}0$ that only depends on the dimensions $d$ and $d'$ and the second moment of $\rho$ and $\bar{\rho}_0$.
\end{theorem}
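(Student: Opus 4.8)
\section*{Proof proposal}

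The plan is to bound $\mathbb{E}\mathcal{W}_1(\mu^N_t,\bar{\rho}_t)$ by inserting the empirical measure $\bar\mu^N_t:=\frac1N\sum_i\delta_{(\bar X^i_t,\omega_i)}$ of $N$ independent copies $\bar X^i$ of \eqref{eq:NL}, each driven by the disorder $\omega_i$ of particle $i$. By the triangle inequality $\mathbb{E}\mathcal{W}_1(\mu^N_t,\bar{\rho}_t)\le\mathbb{E}\mathcal{W}_1(\mu^N_t,\bar\mu^N_t)+\mathbb{E}\mathcal{W}_1(\bar\mu^N_t,\bar{\rho}_t)$, and the second term is a quantitative Wasserstein law of large numbers for the i.i.d.\ sample $(\bar X^i_t,\omega_i)\sim\bar{\rho}_t$; given uniform-in-time second-moment bounds it is controlled, uniformly in $t$, by the rate $h(N)\to0$. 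For the first term I would construct an explicit coupling: keep the same disorder $\omega_i$ for $X^i$ and $\bar X^i$, and couple the Brownian motions by \emph{reflection} in the direction $e^i_t=Z^i_t/|Z^i_t|$, where $Z^i_t:=X^i_t-\bar X^i_t$, smoothly switching to synchronous coupling near $Z^i=0$. Since the disorder coordinates coincide, this yields $\mathbb{E}\mathcal{W}_1(\mu^N_t,\bar\mu^N_t)\le\frac1N\sum_i\mathbb{E}|Z^i_t|$.

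Following Eberle, I would introduce the concave increasing function $f$ (the same $f$ as in Assumption~\ref{hyp:gamma_lip}-1, equivalent to $r\mapsto r$) built from $\kappa$, so that applying It\^o's formula to $f(|Z^i_t|)$ the reflection coupling contributes the doubled radial noise $2\sigma^2 f''(|Z^i_t|)\,dt$, which is negative by concavity; combined with the one-sided estimate $(F(X^i_t,\omega_i)-F(\bar X^i_t,\omega_i))\cdot e^i_t\le-\kappa(|Z^i_t|)|Z^i_t|$ from Assumption~\ref{hyp:F}, the construction of $f$ produces an explicit $c>0$ with
\begin{equation*}
\mathcal{L}f(|Z^i_t|)\le -c\,f(|Z^i_t|)+f'(|Z^i_t|)\,e^i_t\cdot R^i_t, \qquad
R^i_t:=\frac{\alpha_N}{N}\sum_{j}\xi^{(N)}_{i,j}\Gamma(X^i_t,\omega_i,X^j_t,\omega_j)-p\!\int\!\Gamma(\bar X^i_t,\omega_i,y,\tilde\omega)\,\bar{\rho}_t(dy,d\tilde\omega),
\end{equation*}
where $R^i_t$ is the interaction discrepancy and $f'$ is bounded since $f(r)\asymp r$.

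The heart of the argument is to control $u(t):=\frac1N\sum_i\mathbb{E}f(|Z^i_t|)$ through $R^i_t$, which I would split into three vector pieces. First, replacing $(X^i,X^j)$ by $(\bar X^i,\bar X^j)$ uses \ref{hyp:gamma_lip}-1 and, after summing $\frac1N\sum_i f'(|Z^i_t|)\frac{\alpha_N}N\sum_j\xi^{(N)}_{i,j}L_\Gamma(f(|Z^i_t|)+f(|Z^j_t|))$ and regrouping the out- and in-degrees, is bounded by $C L_\Gamma D_{N,g}\,u(t)$. Second, the degree-deviation term $(\alpha_N\frac{d^{(N)}_i}N-p)\int\Gamma(\bar X^i_t,\omega_i,\cdot)\,d\bar{\rho}_t$ is bounded through \ref{hyp:gamma_lip}-2 or \ref{hyp:gamma_lip}-2-bis and $I_{N,g}$, contributing $C L_\Gamma I_{N,g}$. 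Third, the genuine fluctuation term $\frac{\alpha_N}{N}\sum_j\xi^{(N)}_{i,j}\big[\Gamma(\bar X^i_t,\omega_i,\bar X^j_t,\omega_j)-\int\Gamma(\bar X^i_t,\omega_i,\cdot)\,d\bar{\rho}_t\big]$ is a sum of conditionally independent centred terms, conditioning on $(\bar X^i_t,\omega_i)$ and on the graph; its conditional variance is at most $\frac{\alpha_N^2}{N^2}d^{(N)}_i\,C L_\Gamma^2\lesssim L_\Gamma^2\frac{\alpha_N D_{N,g}}N$, so by Cauchy--Schwarz its expected norm is $\lesssim L_\Gamma\sqrt{\alpha_N D_{N,g}/N}$. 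Collecting the three pieces gives
\begin{equation*}
u'(t)\le-\big(c-C L_\Gamma D_{N,g}\big)u(t)+C'L_\Gamma\Big(\sqrt{\tfrac{\alpha_N D_{N,g}}N}+I_{N,g}\Big),
\end{equation*}
and choosing $c_\Gamma$ so that $L_\Gamma\le c_\Gamma/D_{N,g}$ forces $c-CL_\Gamma D_{N,g}\ge\tilde c>0$. A Gr\"onwall argument, the bound $\mathbb{E}\mathcal{W}_1(\mu^N_t,\bar\mu^N_t)\lesssim u(t)$ and the insertion of $h(N)$ then yield \eqref{eq:main_thm}.

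The main obstacle I anticipate is the third, fluctuation, step together with the non-exchangeability: unlike the mean-field case one cannot reduce to a single tagged particle, so all estimates must be carried at the level of the full weighted empirical sum, with the graph weights $\xi^{(N)}_{i,j}$ and both degrees tracked explicitly through $D_{N,g}$. Making the conditional-variance computation rigorous requires uniform-in-time second-moment bounds on $X^i_t$ and $\bar X^i_t$, so that $\mathrm{Var}(\Gamma(\bar X^i_t,\omega_i,\bar X^j_t,\omega_j))$ is controlled independently of $t$; this is exactly where the one-sided Assumption~\ref{hyp:F} and the moment Assumption~\ref{hyp:rho_0} enter, and it is also what makes $h(N)$ uniform in time. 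A secondary delicate point is the joint handling, inside It\^o's formula, of the reflection/synchronous switch and the interaction perturbation, ensuring the negative $f''$ term dominates so that the single-particle contraction survives the addition of the graph interaction.
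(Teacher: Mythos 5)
Your proposal follows essentially the same route as the paper: the same reflection/synchronous coupling with shared disorder, the same concave function $f$ giving the contraction rate $c$, the same three-way decomposition of the interaction discrepancy (Lipschitz replacement controlled by $D_{N,g}$, conditionally centred fluctuation controlled by $\sqrt{\alpha_N D_{N,g}/N}$ via a conditional second-moment/Cauchy--Schwarz argument, and degree deviation controlled by $I_{N,g}$), the same Gr\"onwall argument under the smallness condition $L_\Gamma\leq c_\Gamma/D_{N,g}$, and the same final triangle inequality with $\bar\mu^N_t$ and the Fournier--Guillin rate $h(N)$ backed by the uniform-in-time second moment of the nonlinear process. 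The only points the paper makes explicit that you elide are the choice of an optimal permutation at time $0$ (so that $\frac1N\sum_i\mathbb{E}|Z^i_0|$ equals $\mathbb{E}\mathcal{W}_1(\mu^N_0,\bar\mu^N_0)$, which is what produces the term $e^{-\tilde c t}\mathbb{E}\mathcal{W}_1(\mu^N_0,\bar\rho_0)$) and the quantitative bookkeeping of the synchronous-switch error $\omega(\delta)+2\sigma^2cf(\delta)$, removed by letting $\delta\to0$; also, only the nonlinear process $\bar X^i_t$ needs uniform moment bounds, not $X^i_t$ as you suggest.
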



\begin{remark}
The smallness assumption on the Lipschitz coefficient of $\Gamma$ is natural to obtain uniform in time propagation of chaos, as for large interactions the non linear limit may have several stationary measures (see \cite{HT10} for instance). Non uniqueness of the stationary measures of \eqref{eq:NL} prevents time-uniform estimate for the mean field limit, since on the other hand there is uniqueness of the stationary distribution of \eqref{eq:IPS}.
\end{remark}


\begin{remark}
We may write the order of magnitude of the rate function $h$ depending on the dimension
\begin{align*}
    h(N)\lesssim& N^{-\frac{1}{3}}\mathds{1}_{d+d'\leq2}+N^{-\frac{1}{d+d'}}\mathds{1}_{d+d'\geq3}.
\end{align*}
In reality, this term is a consequence of the approximation of the measure $\bar{\rho}_t$ by the empirical measure given by $N$ independent random variables distributed according to $\bar{\rho}_t$, as it is given by \cite{FG15}. We notice that it could be improved, however at a cost, as there is a tradeoff between the speed of convergence and the moments we impose on the initial condition. If we assumed that $\bar{\rho}_0$ admits a $q$-th moment with $q>2$, we could show the following bound:
\begin{align*}
    h(N)\lesssim& N^{-\frac{1}{2}}\mathds{1}_{d+d'=1}+N^{-\frac{1}{2}}\log(1+N)\mathds{1}_{d+d'=2}+N^{-\frac{1}{d+d'}}\mathds{1}_{d+d'\geq3}.
\end{align*}
\end{remark}


\begin{remark}
If Assumption~\ref{hyp:gamma_lip}-2-bis holds instead of Assumption~\ref{hyp:gamma_lip}-2, the coefficients $L_\Gamma$ within the parentheses in \eqref{eq:main_thm} are to be replaced by $L_\infty$.
\end{remark}


\begin{remark}
Although we do not write the calculations for the sake of conciseness, a similar theorem can be proved if $p=0$ for weaker assumptions on $\Gamma$. In this case, the limit is a linear Ornstein-Uhlenbeck process, and we do not rely on a form of Law of Large Number to get the limit as $N$ goes to infinity, we only have to show that the interaction term vanishes sufficiently fast. Thus we need the expectation of $\Gamma$ to be bounded uniformly in time, which can be done under weaker assumptions and in particular doesn't require a Lipschitz assumption, as the convergence to 0 of $\frac{\alpha_N}{N}\sum_{j=1}^N\xi^{(N)}_{i,j}$ then yields the result of propagation of chaos.
\end{remark}

%
%
%
%

\subsection{Semimetric and preliminary results}

As mentioned previously, we use a concave function to modify the Euclidean distance in order to use the reflection coupling. Define
\begin{align*}
    R_0:=&\inf\left\{s\geq0\ :\ \forall r\geq s,\ \kappa(r)\geq0 \right\},\\
    R_1:=&\inf\left\{s\geq R_0\ :\ \forall r\geq s,\ s(s-R_0)\kappa(r)\geq8\sigma^2 \right\},
\end{align*}
and the functions
\begin{align*}
\phi(r):=&\exp\left(-\frac{1}{4\sigma^2}\int_0^rs\kappa_-(s)ds\right),\\
\Phi(r):=&\int_0^r\phi(s)ds,\\
g(r):=&1-\frac{c}{2}\int_0^{r\wedge R_1}\Phi(s)\phi(s)^{-1}ds,
\end{align*}
where $\kappa_-=\max(0,-\kappa)$ and $c=\left(\int_0^{R_1}\Phi(s)\phi(s)^{-1}ds\right)^{-1}.$
Finally, define 
\begin{equation}\label{eq:def_f}
    f(x)=\int_0^xg(t)\phi(t)dt.
\end{equation}
Note that $\phi$ and $g$ are positive non-increasing on $\mathbb{R}^+$ and that $\phi(r)=\phi(R_0)\leq 1$ for $r\geq R_0$, and $g(r)=\frac{1}{2}$ for $r\geq R_1$. In particular, for $r\geq R_1$ we simply have $f(r)=f(R_1)+\frac{\phi(R_0)(r-R_1)}{2}$. The function $f$ satisfies moreover some useful properties gathered in the following Lemma, the proof of which can be found in \cite{DEGZ20}.


\begin{lemma}[Some properties of the semimetric]\label{lem:semi-metric}
The function $f$ satisfies the following properties :
\begin{itemize}
    \item $f:\mathbb{R}^+\rightarrow \mathbb{R}^+$ is non-negative and increasing. Furthermore $0< f'(x)\leq 1$ for all $x\geq0$.
    \item There exist $c_f, C_f>0$ such that for all $x\in\mathbb{R}$, we have $c_f|x|\leq f(|x|)\leq C_f|x|$.
    \item We have
    \begin{equation}\label{ineq derivatives f}
        \forall r\in\mathbb{R}^+\setminus \{R_1\},\ \ \   f''(r)-\frac{1}{4\sigma^2}r\kappa(r)f'(r)\leq-\frac{c}{2}f(r).
    \end{equation}
\end{itemize}
\end{lemma}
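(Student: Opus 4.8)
The plan is to reduce everything to the identity $f'(x)=g(x)\phi(x)$, which follows from \eqref{eq:def_f} and the fundamental theorem of calculus, and then to read off the three properties from the explicit expressions for $\phi$ and $g$. The first two items are elementary consequences of two-sided bounds on $f'$; the whole difficulty is concentrated in the differential inequality \eqref{ineq derivatives f}, which I would check separately on $[0,R_1)$ and on $(R_1,\infty)$.

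First I would record the shape of the two factors. Since $s\kappa_-(s)\ge0$, the function $\phi$ is positive and non-increasing with $\phi(0)=1$; moreover $\kappa_-(s)=0$ for $s\ge R_0$, so $\phi(r)=\phi(R_0)$ for $r\ge R_0$ and $\phi(R_0)\le\phi(r)\le1$ everywhere. The function $g$ is positive and non-increasing with $g(0)=1$, and the normalization $c=\left(\int_0^{R_1}\Phi\phi^{-1}\right)^{-1}$ gives precisely $g(R_1)=\tfrac12$, so $g\equiv\tfrac12$ on $[R_1,\infty)$ and $\tfrac12\le g\le1$ throughout. Combining, $0<\tfrac12\phi(R_0)\le f'(x)=g(x)\phi(x)\le1$, which yields the first item ($f$ is increasing, non-negative because $f(0)=0$, and $0<f'\le1$); integrating this two-sided bound on $f'$ from $0$ to $|x|$ gives the second item with $C_f=1$ and $c_f=\tfrac12\phi(R_0)$.

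For \eqref{ineq derivatives f} I would differentiate once more. On $(0,R_1)$ one has $\phi'/\phi=-\tfrac{1}{4\sigma^2}r\kappa_-(r)$ and $g'=-\tfrac{c}{2}\Phi\phi^{-1}$, so
\[
  f''(r)=g'\phi+g\phi'=-\frac{c}{2}\Phi(r)-\frac{1}{4\sigma^2}r\kappa_-(r)f'(r).
\]
Adding the term $-\tfrac{1}{4\sigma^2}r\kappa(r)f'(r)$ and using $\kappa(r)+\kappa_-(r)=\max(\kappa(r),0)\ge0$ together with $f'>0$ gives
\[
  f''(r)-\frac{1}{4\sigma^2}r\kappa(r)f'(r)=-\frac{c}{2}\Phi(r)-\frac{1}{4\sigma^2}r\big(\kappa(r)+\kappa_-(r)\big)f'(r)\le-\frac{c}{2}\Phi(r)\le-\frac{c}{2}f(r),
\]
the last inequality because $g\le1$ forces $f\le\Phi$. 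On $(R_1,\infty)$ one instead has $g\equiv\tfrac12$, $g'=0$, $\phi\equiv\phi(R_0)$ and $\kappa_-=0$, whence $f''=0$ and the left-hand side equals $-\tfrac{1}{8\sigma^2}r\kappa(r)\phi(R_0)$.

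The remaining point, which I expect to be the main obstacle, is to show $-\tfrac{1}{8\sigma^2}r\kappa(r)\phi(R_0)\le-\tfrac{c}{2}f(r)$ for $r>R_1$; this is exactly where the definition of $R_1$ must be used quantitatively. Since $f\le\Phi$, it suffices to establish the stronger affine inequality $\tfrac{1}{8\sigma^2}r\kappa(r)\phi(R_0)\ge\tfrac{c}{2}\Phi(r)$, where $\Phi(r)=\Phi(R_0)+\phi(R_0)(r-R_0)$ is affine on $[R_0,\infty)$. By definition of $R_1$ one has $\kappa(r)\ge 8\sigma^2/(R_1(R_1-R_0))$ for $r\ge R_1$, so the left side dominates an explicit affine function of $r$, and I would close the inequality by comparing the two affine functions: matching slopes reduces to $c\le 2/(R_1(R_1-R_0))$ and matching the values at $r=R_1$ reduces to a comparable bound, both of which follow from the lower estimate $c^{-1}\ge\tfrac{\Phi(R_0)}{\phi(R_0)}(R_1-R_0)+\tfrac{(R_1-R_0)^2}{2}$, obtained by restricting the defining integral to $[R_0,R_1]$ (where $\phi=\phi(R_0)$) and using $\Phi(R_0)\ge R_0\phi(R_0)$. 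This reproduces the computation of \cite{DEGZ20}, to which the statement refers.
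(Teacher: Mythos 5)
Your proof is correct: the identity $f'=g\phi$, the resulting bounds $0<\tfrac{1}{2}\phi(R_0)\le f'\le 1$ (giving items one and two with $c_f=\tfrac{1}{2}\phi(R_0)$, $C_f=1$), the computation $f''=-\tfrac{c}{2}\Phi-\tfrac{1}{4\sigma^2}r\kappa_- f'$ on $[0,R_1)$ combined with $\kappa+\kappa_-\ge0$ and $f\le\Phi$, and the affine comparison on $(R_1,\infty)$ using $\kappa(r)\ge 8\sigma^2/(R_1(R_1-R_0))$ together with $c^{-1}\ge\tfrac{\Phi(R_0)}{\phi(R_0)}(R_1-R_0)+\tfrac{(R_1-R_0)^2}{2}$ all check out. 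The paper gives no proof of its own but defers to \cite{DEGZ20}, and your argument is precisely that reference's verification, so it takes essentially the same approach.
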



We now give a uniform in time moment bound for the non linear process \eqref{eq:NL}, relying in particular on Assumption~\ref{hyp:F}. 

\begin{lemma}[Uniform in time bound on the second moment]\label{lem:mom_nl}
Consider Assumption~\ref{hyp:F},~\ref{hyp:gamma_lip}~and~\ref{hyp:rho_0}, and let $(\bar{X}^{\omega}_t)_t$ be the unique strong solution of \eqref{eq:NL}. Assuming $2pC_fL_\Gamma<m_F$, there exists a constant $\bar{\mathcal{C}}_2>0$ such that for all $t\geq0$
\begin{align*}
    \mathbb{E}\left(|\bar{X}^\omega_t|^2\right)\leq\bar{\mathcal{C}}_2.
\end{align*}
\end{lemma}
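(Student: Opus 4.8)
The plan is to apply It\^o's formula to $t\mapsto |\bar X^\omega_t|^2$ and reduce the statement to a closed scalar differential inequality for $m_2(t):=\mathbb{E}(|\bar X^\omega_t|^2)$, then conclude by a Gr\"onwall-type argument. Since $B$ is $d$-dimensional with diffusion coefficient $\sqrt 2\sigma$, the quadratic variation contributes the constant $2\sigma^2 d$, and taking expectations (which kills the martingale part) gives
\[
    \frac{d}{dt}m_2(t)=2\,\mathbb{E}\!\left[\bar X^\omega_t\cdot F(\bar X^\omega_t,\omega)\right]+2p\,\mathbb{E}\!\left[\bar X^\omega_t\cdot\int \Gamma(\bar X^\omega_t,\omega,y,\tilde\omega)\,\bar\rho_t(dy,d\tilde\omega)\right]+2\sigma^2 d.
\]
Rigorously, this identity should first be written up to a localizing stopping time, with the uniform bound recovered at the end through Fatou's lemma; I would handle the a priori finiteness of the second moment in this standard way and not dwell on it.

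For the drift term I would use Assumption~\ref{hyp:F} with $y=0$, giving $F(x,\omega)\cdot x\leq M_F-m_F|x|^2+F(0,\omega)\cdot x$. Combined with Young's inequality and the moment bound of Assumption~\ref{hyp:rho_0}-1, the cross term is controlled by $\mathbb{E}[F(0,\omega)\cdot\bar X^\omega_t]\leq\sqrt{C_{\mathrm{dis}}}\,\sqrt{m_2(t)}$, so that this contribution is of the form $2M_F-2m_F\,m_2(t)+C\sqrt{m_2(t)}$. The essential feature here is the strictly negative coefficient $-2m_F$ in front of $m_2(t)$.

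The main subtlety lies in the interaction term, because of self-consistency: since $\bar\rho_t=\mathrm{Law}(\bar X^\omega_t,\omega)$, the inner integral reintroduces the first and second moments of the process itself. Using Assumption~\ref{hyp:gamma_lip}-1 together with $\Gamma(0,0,0,0)=0$ and the linear bound $f(|x|)\leq C_f|x|$ from Lemma~\ref{lem:semi-metric}, and handling the disorder dependence via either Assumption~\ref{hyp:gamma_lip}-2 (Lipschitz in $\omega$) or Assumption~\ref{hyp:gamma_lip}-2-bis (boundedness), I would bound $|\Gamma(x,\omega,y,\tilde\omega)|\leq L_\Gamma C_f(|x|+|y|)+L_\Gamma(|\omega|+|\tilde\omega|)$. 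After taking expectations and repeatedly using $\mathbb{E}|\bar X^\omega_t|\leq\sqrt{m_2(t)}$ along with $\int|y|\,\bar\rho_t(dy,d\tilde\omega)=\mathbb{E}|\bar X^\omega_t|$, this yields a contribution of order $4pL_\Gamma C_f\,m_2(t)+C'\sqrt{m_2(t)}$, where the leading $m_2$ term arises precisely from the position-position Lipschitz dependence of $\Gamma$.

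Collecting the estimates produces $\frac{d}{dt}m_2(t)\leq-(2m_F-4pL_\Gamma C_f)\,m_2(t)+C_1\sqrt{m_2(t)}+C_2$. The hypothesis $2pC_fL_\Gamma<m_F$ is exactly what makes the linear coefficient $\lambda:=2(m_F-2pL_\Gamma C_f)$ strictly positive, and this is the main obstacle and the reason for the smallness condition. Absorbing the square-root term by Young's inequality, $C_1\sqrt{m_2}\leq\tfrac{\lambda}{2}m_2+C_1^2/(2\lambda)$, reduces the inequality to $\frac{d}{dt}m_2(t)\leq-\tfrac{\lambda}{2}m_2(t)+C_3$, whence Gr\"onwall gives $m_2(t)\leq m_2(0)e^{-\lambda t/2}+2C_3/\lambda$. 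Finally Assumption~\ref{hyp:rho_0}-3 bounds $m_2(0)=\int|x|^2\bar\rho^1_0(dx)\leq\bar C$, yielding the time-uniform constant $\bar{\mathcal C}_2:=\bar C+2C_3/\lambda$. In the bounded case of Assumption~\ref{hyp:gamma_lip}-2-bis the interaction produces no $m_2$ term at all, so the argument is strictly easier and the smallness requirement is not even needed there.
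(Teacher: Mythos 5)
Your proof is correct and follows essentially the same route as the paper's: It\^o's formula on the squared norm, the one-sided Lipschitz bound on $F$ split off at $0$, the decomposition $\Gamma(x,\omega,y,\tilde\omega)=\bigl(\Gamma(x,\omega,y,\tilde\omega)-\Gamma(0,\omega,0,\tilde\omega)\bigr)+\Gamma(0,\omega,0,\tilde\omega)$ controlled via Assumptions~\ref{hyp:gamma_lip}-1 and \ref{hyp:gamma_lip}-2, absorption of the lower-order terms by Young's inequality under the condition $2pC_fL_\Gamma<m_F$, and Gr\"onwall. Your explicit localization/Fatou remark and the closed ODE for $m_2(t)$ are just slightly more detailed bookkeeping of what the paper does tersely, including its side remark that the bounded case \ref{hyp:gamma_lip}-2-bis is simpler.
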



\begin{proof}
Using Itô's formula on the function $H(x)=\frac{x^2}{2}$, we obtain
\begin{equation}\label{eq:Ito_H}
    dH\left(\bar{X}^{\omega}_t\right)=A_tdt+dM_t,
\end{equation}
where $(M_t)_t$ is a continuous local martingale and 
\begin{align*}
    A_t=\bar{X}^{\omega}_t\cdot F\left(\bar{X}^{\omega}_t,\omega\right)+p\int_{\mathbb{R}^d\times\ \mathbb{R}^{d'}}\bar{X}^{\omega}_t\cdot\Gamma\left(\bar{X}^{\omega}_t,\omega,y,\bar{\omega}\right)\bar{\rho}_t(dy,d\bar{\omega})+\sigma^2 d.
\end{align*}
First, using Assumption~\ref{hyp:F},
\begin{align*}
    \bar{X}^{\omega}_t\cdot F\left(\bar{X}^{\omega}_t,\omega\right)\leq M_F-m_F\left|\bar{X}^{\omega}_t\right|^2+\bar{X}^{\omega}_t\cdot F\left(0,\omega\right).
\end{align*}
Then, using Assumption~\ref{hyp:gamma_lip}
\begin{align*}
    \bar{X}^{\omega}_t\Gamma\left(\bar{X}^{\omega}_t,\omega,y,\bar{\omega}\right)=&\bar{X}^{\omega}_t\cdot\left(\Gamma\left(\bar{X}^{\omega}_t,\omega,y,\bar{\omega}\right)-\Gamma\left(0,\omega,0,\bar{\omega}\right)\right)+\bar{X}^{\omega}_t\cdot\left(\Gamma\left(0,\omega,0,\bar{\omega}\right)-\Gamma\left(0,0,0,0\right)\right)\\
    \leq& C_fL_\Gamma\left|\bar{X}^{\omega}_t\right|\left(\left|\bar{X}^{\omega}_t\right|+\left|y\right|\right)+L_\Gamma\left|\bar{X}^{\omega}_t\right|\left(\left|\omega\right|+\left|\bar{\omega}\right|\right).
\end{align*}
Note that if Assumption~\ref{hyp:gamma_lip}-2-bis holds rather than Assumption~\ref{hyp:gamma_lip}-2, the term above can directly be bounded by $L_\infty |\bar{X}^{\omega}_t|$. Then
\begin{align*}
    p\int_{\mathbb{R}^d\times\mathbb{R}^{d'}}\bar{X}^{\omega}_t\cdot&\Gamma\left(\bar{X}^{\omega}_t,\omega,y,\bar{\omega}\right)\bar{\rho}_t(dy,d\bar{\omega})\\
    \leq& pC_fL_\Gamma\left|\bar{X}^{\omega}_t\right|^2+pC_fL_\Gamma\left|\bar{X}^{\omega}_t\right|\mathbb{E}\left|\bar{X}^{\omega}_t\right|+pL_\Gamma\left|\bar{X}^{\omega}_t\right|\left|\omega\right|+pL_\Gamma C_{dis}^{1/2}\left|\bar{X}^{\omega}_t\right|,
\end{align*}
where for this last term we used Assumption~\ref{hyp:rho_0}. Finally
\begin{align*}
    \mathbb{E}A_t\leq M_F+\sigma^2 d- \left(m_F-2pC_fL_\Gamma\right)\mathbb{E}\left(\left|\bar{X}^{\omega}_t\right|^2\right)+\mathbb{E}\left(\left|\bar{X}^{\omega}_t\right|\left(\left|F\left(0,\omega\right)\right|+pL_\Gamma|\omega|+pL_\Gamma C_{dis}^{1/2}\right)\right).
\end{align*}
Assuming $2pC_fL_\Gamma<m_F$, using the inequality $\forall x, y\in\mathbb{R},\ \forall \alpha>0,\ xy\leq\frac{\alpha x^2}{2}+\frac{y^2}{2\alpha}$, we can ensure there exist two non negative constant $B_1$ and $B_2$ such that
\begin{align*}
    \mathbb{E}A_t\leq B_1-B_2\mathbb{E}H(\bar{X}^{\omega}_t).
\end{align*}
Taking the expectation in \eqref{eq:Ito_H}, remarking that $M_t$ is a martingale, and using Gronwall's lemma yields the desired result.
\end{proof}

%
%
%
%

\section{Mean-field limit}

Let $\phi_s,\phi_r:\mathbb{R}^+\rightarrow\mathbb{R}$ be two Lipschitz continuous functions satisfying, for some parameter $\delta>0$, the following conditions
\begin{equation}\label{def: phis phir}
    \forall x\in\mathbb{R}^+,\ \ \ \phi_s(x)^2+\phi_r(x)^2=1,\ \ \ \phi_r(x)=\left\{\begin{array}{ll}1&\text{ if }x\geq\delta,\\0&\text{ if }x\leq\delta/2 \end{array}\right.
\end{equation}
These functions describe the regions of space in which we either use a synchronous coupling ($\phi_s\equiv1$ and $\phi_r\equiv0$) and a reflection coupling ($\phi_s\equiv0$ and $\phi_r\equiv1$). Ideally, we would like to use 
$\phi_r(x)=\mathds{1}_{x>0}$, but the indicator function is not continuous, hence the reason we use a Lipschitz approximation.

Consider the initial conditions $(X_0^i,\omega_i)_{i\in\{1,...,N\}}$ for \eqref{eq:IPS}, and consider $N$ independent random variables $(\bar{X}_0^i)_{i=1,...,N}$ identically distributed according to $\bar{\rho}^1_0$ (recall Assumption~\ref{hyp:rho_0}). We know (see for instance Proposition~2.1 of \cite{COTFNT}) that there exists a least one permutation $\tau:\{1,...,N\}\rightarrow\{1,...,N\}$ such that
\begin{equation}\label{eq:wass_init}
    \mathcal{W}_1\left(\frac{1}{N}\sum_{i=1}^N \delta_{X^i_0},\frac{1}{N}\sum_{i=1}^N \delta_{\bar X^i_0}\right)=\frac{1}{N}\sum_{i=1}^N\left|X_0^i-\bar{X}_0^{\tau(i)}\right|.
\end{equation}
If there exists more that one such permutation we choose one of them uniformly. Up to renumbering, we assume $\tau(i)=i$ for all $i\in\{1,...,N\}$. The random variables $(\bar X^i_0,\omega_i)$ are then IID with distribution $\bar \rho_0$. Using these initial conditions, we now consider the following coupling
\begin{equation}\label{eq:coupling_s}
\left\{
\begin{array}{rcl}
    dX^i_t&=&F\left(X^i_t,\omega_i\right)dt+\frac{\alpha_N}{N}\sum_{j=1}^N\xi^{(N)}_{i,j}\Gamma\left(X^i_t,\omega_i,X^j_t,\omega_j\right)dt+\sqrt{2}\sigma\phi_s(|X^i_t-\bar{X}^i_t|)d\tilde{B}^i_t\\
    &&+\sqrt{2}\sigma\phi_r(|X^i_t-\bar{X}^i_t|)dB^i_t,\\
    d\bar{X}^i_t&=&F\left(\bar{X}^i_t,\omega_i\right)dt+p\int\Gamma(\bar{X}^i_t,\omega_i,x,\omega)\bar{\rho}_t(dx,d\omega)dt+\sqrt{2}\sigma \phi_s(|X^i_t-\bar{X}^i_t|)d\tilde{B}^i_t\\
    &&+\sqrt{2}\sigma(Id-2e^i_t(e^i_t)^T)\phi_r(|X^i_t-\bar{X}^i_t|)dB^i_t,
\end{array}
\right.
\end{equation}
where 
\begin{equation*}
    e^i_t:=\left\{\begin{array}{ll}\frac{X^i_t-\bar{X}^i_t}{|X^i_t-\bar{X}^i_t|}&\text{ if }X^i_t-\bar{X}^i_t\neq0,\\0&\text{ if }X^i_t-\bar{X}^i_t=0.\end{array}\right.,
\end{equation*}
and $\left(B^i_\cdot\right)_{i=1,...,N}$ and $\left(\tilde{B}^i_\cdot\right)_{i=1,...,N}$ are sequences of independent Brownian motions, and $\bar \rho_t$ is the distribution of the non linear diffusion \eqref{eq:NL}. In particular, Levy's characterization of Brownian motion ensures that $\left(\bar{X}^i_\cdot,\omega_i\right)_i$ are $N$ independent copies of the same diffusion process and thus $\bar{\rho}_t=\text{Law}(\bar{X}^1_t,\omega_1)=...=\text{Law}(\bar{X}^N_t,\omega_N)$.

Let us denote $Z^i_t=X^i_t-\bar{X}^i_t$. The following lemma concerning the dynamics of $|Z^i_t|$, which can be found in \cite{DEGZ20}, relies on dominated convergence and the fact that $\phi_r(x)$ is zero around $x=0$.


\begin{lemma}[Lemma~7 of \cite{DEGZ20}]\label{lem:dyn_norme_1}
For all $t\geq0$ and all $i\in\{1,...,N\}$,
\begin{equation*}
    d\left|Z^i_t\right|=\left(F\left(X^i_t,\omega_i\right)-F\left(\bar{X}^i_t,\omega_i\right)\right)\cdot e^i_t dt+A^i_tdt+2\sqrt{2}\sigma\phi_r(\left|Z^i_t\right|)e^i_t\cdot dB^i_t,
\end{equation*}
where $(A^i_t)_t$ is an adapted stochastic process such that
\begin{equation*}
A^i_t\leq\left|\frac{\alpha_N}{N}\sum_{j=1}^N\xi^{(N)}_{i,j}\Gamma\left(X^i_t,\omega_i,X^j_t,\omega_j\right)-p\int\Gamma(\bar{X}^i_t,\omega_i,x,\omega)\bar{\rho}_t(dx,d\omega)\right|.
\end{equation*}
\end{lemma}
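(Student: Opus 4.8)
The plan is to apply Itô's formula to the function $x\mapsto|x|$ along the process $Z^i_t$, the only genuine difficulty being the non-differentiability of the Euclidean norm at the origin, which I would circumvent by a standard regularization combined with the fact that the reflection is switched off near $0$. First I would compute the dynamics of $Z^i_t$ itself. Subtracting the two lines of \eqref{eq:coupling_s}, the synchronous noise $\sqrt{2}\sigma\phi_s(|Z^i_t|)d\tilde{B}^i_t$, which is shared by both processes, cancels, and the reflection part contributes $\sqrt{2}\sigma\phi_r(|Z^i_t|)(Id-(Id-2e^i_t(e^i_t)^T))dB^i_t=2\sqrt{2}\sigma\phi_r(|Z^i_t|)e^i_t(e^i_t)^T dB^i_t$. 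Writing $b^i_t:=(F(X^i_t,\omega_i)-F(\bar X^i_t,\omega_i))+\Delta^i_t$ for the total drift, where $\Delta^i_t$ is the difference of the two interaction terms, this gives
\[
dZ^i_t=b^i_t\,dt+2\sqrt{2}\sigma\phi_r(|Z^i_t|)\,e^i_t(e^i_t)^T dB^i_t,
\]
whose quadratic variation is $d\langle Z^i\rangle_t=8\sigma^2\phi_r(|Z^i_t|)^2\,e^i_t(e^i_t)^T\,dt$, using $e^i_t(e^i_t)^Te^i_t(e^i_t)^T=e^i_t(e^i_t)^T$.

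To handle the singularity at $0$ I would work with the smooth approximation $V_\varepsilon(x):=\sqrt{|x|^2+\varepsilon^2}$, for which $\nabla V_\varepsilon(x)=x(|x|^2+\varepsilon^2)^{-1/2}$ and $\nabla^2V_\varepsilon(x)=(|x|^2+\varepsilon^2)^{-1/2}Id-(|x|^2+\varepsilon^2)^{-3/2}xx^T$. Applying Itô's formula to $V_\varepsilon(Z^i_t)$ and isolating the three contributions, the drift term $\nabla V_\varepsilon(Z^i_t)\cdot b^i_t$ converges, as $\varepsilon\to0$ and by dominated convergence (the gradient being bounded by $1$ and converging pointwise to $e^i_t$), to $(F(X^i_t,\omega_i)-F(\bar X^i_t,\omega_i))\cdot e^i_t+A^i_t$, where $A^i_t:=e^i_t\cdot\Delta^i_t$; since $|e^i_t|\leq1$, Cauchy--Schwarz gives $A^i_t\leq|\Delta^i_t|$, which is exactly the claimed bound. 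Likewise the martingale term converges to $2\sqrt{2}\sigma\phi_r(|Z^i_t|)\,e^i_t\cdot dB^i_t$.

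The crux is the Itô correction term $\tfrac12\,\mathrm{tr}(\nabla^2V_\varepsilon(Z^i_t)\,d\langle Z^i\rangle_t)$. A direct computation using $e^i_t=Z^i_t/|Z^i_t|$ yields $\mathrm{tr}(\nabla^2V_\varepsilon(Z^i_t)\,e^i_t(e^i_t)^T)=\varepsilon^2(|Z^i_t|^2+\varepsilon^2)^{-3/2}$, so that this term equals $4\sigma^2\phi_r(|Z^i_t|)^2\,\varepsilon^2(|Z^i_t|^2+\varepsilon^2)^{-3/2}\,dt$. This is precisely where I would invoke that $\phi_r$ vanishes on $[0,\delta/2]$: the integrand is supported on $\{|Z^i_t|>\delta/2\}$, where it is bounded by $\varepsilon^2(\delta/2)^{-3}$ and converges pointwise to $0$, so dominated convergence forces this contribution to vanish as $\varepsilon\to0$. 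Heuristically, for $Z^i_t\neq0$ the reflection noise is purely radial while the Hessian of $|\cdot|$ only sees the tangential directions, so the would-be correction is zero; the regularization merely makes this rigorous. Collecting the three limits gives the stated identity.

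The main obstacle is therefore entirely the non-smoothness of $|\cdot|$ at the origin: a naive application of Itô would produce the singular term $|Z^i_t|^{-1}\mathrm{tr}((Id-e^i_t(e^i_t)^T)\,d\langle Z^i\rangle_t)$, which is ill-defined where $Z^i_t=0$. The coupling is built exactly to neutralize this, through two independent mechanisms acting at once: the projection $(Id-e^i_t(e^i_t)^T)$ annihilates the radial noise direction $e^i_t(e^i_t)^T$, and $\phi_r$ turns off the reflection before $Z^i_t$ can reach $0$. The approximation $V_\varepsilon$ makes both precise simultaneously, and everything else is routine.
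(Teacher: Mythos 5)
Your proof is correct and follows exactly the route the paper points to: the paper defers to Lemma~7 of \cite{DEGZ20}, noting only that the argument ``relies on dominated convergence and the fact that $\phi_r(x)$ is zero around $x=0$,'' and your regularization of $|\cdot|$ by $V_\varepsilon$, with the It\^o correction term $4\sigma^2\phi_r(|Z^i_t|)^2\varepsilon^2(|Z^i_t|^2+\varepsilon^2)^{-3/2}$ killed by the support condition on $\phi_r$ and dominated convergence, is precisely that argument. The explicit identification $A^i_t=e^i_t\cdot\Delta^i_t$ together with $|e^i_t|\leq 1$ gives the stated adapted process and its bound, so nothing is missing.
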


Applying Itô-Tanaka's formula, as the function $f$ is $\mathcal{C}^1$ and piecewise $\mathcal{C}^2$ and concave, and relying on Lemma~\ref{lem:dyn_norme_1} we obtain
\begin{equation}\label{eq:dyn_f}
    df\left(\left|Z^i_t\right|\right)=f'\left(\left|Z^i_t\right|\right)\left(\left(F\left(X^i_t,\omega_i\right)-F\left(\bar{X}^i_t,\omega_i\right)\right)\cdot e^i_t+A^i_t\right)dt+4f''\left(\left|Z^i_t\right|\right)\sigma^2\phi_r^2(\left|Z^i_t\right|)dt+dM^i_t,
\end{equation}
where, with a slight abuse of notation, $f'$ denotes the left derivative of $f$ and $f''$ its almost everywhere defined second derivative, and $(M^i_t)_t$ is a continuous martingale (recall $f'$ is bounded). Let us define $\omega:\mathbb{R}^+\mapsto\mathbb{R}^+$ by
\begin{align*}
    \omega(r):=\sup_{s\in[0,r]}s\kappa_-(s).
\end{align*}
Relying on Assumption~\ref{hyp:F}, \eqref{ineq derivatives f} and \eqref{def: phis phir} we then get the following inequality:
\begin{align*}
\left(F\left(X^i_t,\omega_i\right)-F\left(\bar{X}^i_t,\omega_i\right)\right)&\cdot e^i_tf'\left(\left|Z^i_t\right|\right)+4f''\left(\left|Z^i_t\right|\right)\sigma^2\phi_r^2(\left|Z^i_t\right|)\\
\leq&-\left|Z^i_t\right|\kappa\left(\left|Z^i_t\right|\right)f'\left(\left|Z^i_t\right|\right)+4f''\left(\left|Z^i_t\right|\right)\sigma^2\phi_r^2(\left|Z^i_t\right|)\\
\leq&-2c\sigma^2 f\left(\left|Z^i_t\right|\right)\phi_r^2(\left|Z^i_t\right|)-\left|Z^i_t\right|\kappa\left(\left|Z^i_t\right|\right)f'\left(\left|Z^i_t\right|\right)\phi_s^2(\left|Z^i_t\right|)\\
\leq&-2c\sigma^2 f\left(\left|Z^i_t\right|\right)\phi_r^2(\left|Z^i_t\right|)+\omega(\delta)\\
\leq&-2c\sigma^2 f\left(\left|Z^i_t\right|\right)+\omega(\delta)+2c\sigma^2 f(\delta).
\end{align*}
We deduce that there exists an adapted process $K^i$ satisfying
\begin{align*}
    K^i_t\leq \omega(\delta)+2\sigma^2 c f(\delta),
\end{align*}
and such that for all $\tilde{\kappa}\in[0,2\sigma^2c]$
\begin{align}
d(e^{(2c\sigma^2 -\tilde{\kappa}) t}f\left(\left|Z^i_t\right|\right)=&e^{(2c\sigma^2 -\tilde{\kappa}) t}df\left(\left|Z^i_t\right|\right)+(2c\sigma^2 -\tilde{\kappa}e^{(2c\sigma^2 -\tilde{\kappa})t}f\left(\left|Z^i_t\right|\right)dt \nonumber\\
=&e^{(2c\sigma^2 -\tilde{\kappa}) t}(-\tilde{\kappa} f\left(\left|Z^i_t\right|\right)+ K^i_t+A^i_t)dt+e^{(2c\sigma^2 -\tilde{\kappa}) t}dM^i_t,\label{dyn expo with kappa}
\end{align}
with $A^i_t$ given in Lemma~\ref{lem:dyn_norme_1}.
The next step is to deal with $A^i_t$. We have
\begin{align*}
    &\left|\frac{\alpha_N}{N}\sum_{j=1}^N\xi^{(N)}_{i,j}\Gamma\left(X^i_t,\omega_i,X^j_t,\omega_j\right)-p\int\Gamma(\bar{X}^i_t,\omega_i,x,\omega)\bar{\rho}_t(dx,d\omega)\right|\\
    &\hspace{1cm}\leq\left|\frac{\alpha_N}{N}\sum_{j=1}^N\xi^{(N)}_{i,j}\left(\Gamma\left(X^i_t,\omega_i,X^j_t,\omega_j\right)-\Gamma\left(\bar{X}^i_t,\omega_i,\bar{X}^j_t,\omega_j\right)\right)\right|\\
    &\hspace{1.5cm}+\left|\frac{\alpha_N}{N}\sum_{j=1}^N\xi^{(N)}_{i,j}\left(\Gamma\left(\bar{X}^i_t,\omega_i,\bar{X}^j_t,\omega_j\right)-\int\Gamma(\bar{X}^i_t,\omega_i,x,\omega)\bar{\rho}_t(dx,d\omega)\right)\right|\\
    &\hspace{1.5cm}+\left|\left(\frac{\alpha_N}{N}\sum_{j=1}^N\xi^{(N)}_{i,j}-p\right)\int\Gamma(\bar{X}^i_t,\omega_i,x,\omega)\bar{\rho}_t(dx,d\omega)\right|\\
    &\hspace{1cm}=I_{1,i}+I_{2,i}+I_{3,i}.
\end{align*}
We deal with each of these three terms individually.


\paragraph{Dealing with $I_{1,i}$ : Lipschitz continuity of $\Gamma$.}
Using Assumption~\ref{hyp:gamma_lip},
\begin{align*}
    I_{1,i}=&\left|\frac{\alpha_N}{N}\sum_{j=1}^N\xi^{(N)}_{i,j}\left(\Gamma\left(X^i_t,\omega_i,X^j_t,\omega_j\right)-\Gamma\left(\bar{X}^i_t,\omega_i,\bar{X}^j_t,\omega_j\right)\right)\right|\\
    \leq&\frac{L_\Gamma \alpha_N}{N}\sum_{j=1}^N\xi^{(N)}_{i,j}\left(f(|X^i_t-\bar{X}^i_t|)+f(|X^j_t-\bar{X}^j_t|)\right)\\
    =&L_\Gamma  f(|Z^i_t|)\frac{\alpha_N}{N}\sum_{j=1}^N\xi^{(N)}_{i,j}+\frac{L_\Gamma \alpha_N}{N}\sum_{j=1}^N\xi^{(N)}_{i,j} f(|Z^j_t|).
\end{align*}
We then deduce, relying on Assumption~\ref{hyp:graph},
\begin{align*}
    \frac{1}{N}\sum_{i=1}^NI_{1,i}\leq \frac{L_\Gamma}{N} \sum_{i=1}^Nf(|Z^i_t|)\frac{\alpha_N}{N}d^{(N)}_i+\frac{L_\Gamma}{N}\sum_{j=1}^N  f(|Z^j_t|)\frac{ \alpha_N}{N}\tilde d^{(N)}_j
    \leq \frac{L_\Gamma D_{N,g}}{N}\sum_{i=1}^Nf(|Z^i_t|).
\end{align*}


\paragraph{Dealing with $I_{2,i}$ : some law of large numbers.}
Let us denote
\begin{align*}
    \bar{\Gamma}(x,\omega,y,\omega')=\Gamma\left(x,\omega,y,\omega'\right)-\int\Gamma(x,\omega,z,\tilde{\omega})\bar{\rho}_t(dz,d\tilde{\omega}).
\end{align*}
After expansion, we obtain (recall that we have made the hypothesis $\xi^{(N)}_{i,i}=0$)
\begin{align*}
    I_{2,i}^2 =&\Bigg|\frac{\alpha_N}{N}\sum_{j=1}^N\xi^{(N)}_{i,j}\bar{\Gamma}\left(\bar{X}^i_t,\omega_i,\bar{X}^j_t,\omega_j\right)\Bigg|^2\\
    =&\frac{\alpha_N^2}{N^2}\sum_{j=1,j\neq i}^N\xi^{(N)}_{i,j}\bar{\Gamma}\left(\bar{X}^i_t,\omega_i,\bar{X}^j_t,\omega_j\right)^2\\
    &+\frac{\alpha_N^2}{N^2}\sum_{j,k=1,j,k\neq i,j\neq k}^N\xi^{(N)}_{i,j}\xi^{(N)}_{i,k}\bar{\Gamma}\left(\bar{X}^i_t,\omega_i,\bar{X}^j_t,\omega_j\right)\bar{\Gamma}\left(\bar{X}^i_t,\omega_i,\bar{X}^k_t,\omega_k\right).
\end{align*}
The expectation of the last term conditioned to $(\bar X^i_t,\omega^i)$ is equal to $0$, and thus, relying in particular on Assumption~\ref{hyp:gamma_lip}-2 and Lemma~\ref{lem:semi-metric},
\begin{align*}
    \mathbb{E}\Big(I_{2,i}\Big|&\bar{X}^i_t,\omega_i\Big)\\
    \leq & \mathbb{E}\left(I_{2,i}^2\Big|\bar{X}^i_t,\omega_i\right)^{1/2}\\
    =&\mathbb{E}\left(\frac{\alpha_N^2}{N^2}\sum_{j=1,j\neq i}^N\xi^{(N)}_{i,j}\bar\Gamma\left(\bar{X}^i_t,\omega_i,\bar{X}^j_t,\omega_j\right)^2\Big|\bar{X}^i_t,\omega_i\right)^{1/2}\\
    \leq&\mathbb{E}\left(\frac{3\alpha_N^2}{N^2}\sum_{j=1,j\neq i}^N\xi^{(N)}_{i,j}\left|\Gamma\left(\bar{X}^i_t,\omega_i,\bar{X}^j_t,\omega_j\right)-\Gamma(0,\omega_i,0,\omega_j)\right|^2\Big|\bar{X}^i_t,\omega_i\right)^{1/2}\\
    &+\mathbb{E}\left(\frac{3\alpha_N^2}{N^2}\sum_{j=1,j\neq i}^N\xi^{(N)}_{i,j}\left|\int\left(\Gamma\left(0,\omega_i,0,\omega_j\right)-\Gamma(0,\omega_i,0,\omega)\right)\bar{\rho}_t(dx,d\omega)\right|^2\Big|\bar{X}^i_t,\omega_i\right)^{1/2}\\
    &+\mathbb{E}\left(\frac{3\alpha_N^2}{N^2}\sum_{j=1,j\neq i}^N\xi^{(N)}_{i,j}\left|\int\left(\Gamma(0,\omega_i,0,\omega)-\Gamma(\bar{X}^i_t,\omega_i,x,\omega)\right)\bar{\rho}_t(dx,d\omega)\right|^2\Big|\bar{X}^i_t,\omega_i\right)^{1/2}\\
    \leq& 2\left[\frac{6 L_\Gamma^2C_f^2\alpha_N^2}{N^2} d^{(N)}_i \left(|\bar{X}^i_t|^2+\int |x|^2 \bar{\rho}_t(dx,d\omega)\right)\right]^{1/2}+\left[\frac{12L_\Gamma^2\alpha_N^2}{N^2}d^{(N)}_i\int |\omega|^2 \bar{\rho}_t(dx,d\omega)\right]^{1/2}.
\end{align*}
We deduce, recalling Lemma~\ref{lem:mom_nl},
\begin{align*}
    \mathbb{E}\left(I_{2,i}\right)\leq 2\sqrt{3}L_\Gamma(2C_f\bar{\mathcal{ C}}_2^{1/2}+C_{dis}^{1/2})\sqrt{\frac{\alpha_N D_{N,g}}{N}}.
\end{align*}
Remark that if $\Gamma$ satisfies Assumption~\ref{hyp:gamma_lip}-2-bis, then we simply have
\begin{align*}
    \mathbb{E}&\left(I_{2,i}\right)
    \leq 2L_\infty \sqrt{\frac{\alpha_N D_{N,g}}{N}}.
\end{align*}


\paragraph{Dealing with $I_{3,i}$ : convergence of the graph.}
We immediately get
\begin{align*}
    \mathbb{E}\left(I_{3,i}\right)\leq& I_{N,g}\mathbb{E}\left(\left|\int\Gamma(\bar{X}^i_t,\omega_i,x,\omega)\bar{\rho}_t(dx,d\omega)\right|\right),
\end{align*} 
and thus, if Assumption~\ref{hyp:gamma_lip}-2-bis holds, this directly implies
$\mathbb{E}\left(I_{3,i}\right)\leq L_\infty I_{N,g}$.
Otherwise, if Assumption~\ref{hyp:gamma_lip}-2 holds, we obtain
\begin{align*} 
    \mathbb{E}\left(\left|\int\Gamma(\bar{X}^i_t,\omega_i,x,\omega)\bar{\rho}_t(dx,d\omega)\right|\right)\leq& \mathbb{E}\left(\int\left|\Gamma(\bar{X}^i_t,\omega_i,x,\omega)-\Gamma(0,\omega_i,0,\omega)\right|\bar{\rho}_t(dx,d\omega)\right)\\
    &+\mathbb{E}\left(\int\left|\Gamma(0,\omega_i,0,\omega)-\Gamma(0,0,0,0)\right|\bar{\rho}_t(dx,d\omega)\right)\\
    \leq&2L_\Gamma \left(C_f\mathbb{E}\left|\bar{X}^i_t\right|+\mathbb{E}\left|\omega_i\right|\right).
\end{align*}
So, using Lemma~\ref{lem:mom_nl}, we get
\begin{align*}
    \mathbb{E}\left(I_{3,i}\right)\leq 2L_\Gamma \left(C_f\bar{\mathcal{C}}_2^{1/2}+\mathcal{C}_{2,\omega}^{1/2}\right)I_{N,g}.
\end{align*}

\paragraph{Conclusion}
Recalling \eqref{dyn expo with kappa} and choosing $\tilde{\kappa}=L_\Gamma D_{N,g}$ we obtain
\begin{align*}
    d\left(e^{(2\sigma^2 c-L_\Gamma D_{N,g})t}f(|Z^{i}_t|)\right)=e^{(2\sigma^2 c-L_\Gamma D_{N,g})t}\tilde{K}^i_tdt+e^{(2\sigma^2 c-L_\Gamma D_{N,g})t}dM^i_t,
\end{align*}
where there exists a constant $C_0$, depending on the parameters as well as possibly on $\bar{\rho}_0$, but that do not depend on $N$ and on the graph, such that
\begin{align*}
    \frac{1}{N}\sum_{i=1}^N\mathbb{E}\tilde{K}^i_t\leq C_0 L_\Gamma\left(\sqrt{\frac{\alpha_N D_{N,g}}{N}}+I_{N,g}\right)+\omega(\delta)+2\sigma^2 c f(\delta).
\end{align*}
Then
\begin{align*}
\mathbb{E}\left(\frac{e^{(2\sigma^2 c-L_\Gamma D_{N,g})t}}{N}\sum_{i=1}^Nf\left(\left|Z^i_t\right|\right)\right)&-\mathbb{E}\left(\frac{1}{N}\sum_{i=1}^Nf\left(\left|Z^i_0\right|\right)\right)\\
\leq& \frac{e^{(2\sigma^2 c-L_\Gamma D_{N,g})t}-1}{2\sigma^2 c-L_\Gamma D_{N,g}}\left(C_0 L_\Gamma\left(\sqrt{\frac{\alpha_N D_{N,g}}{N}}+I_{N,g}\right)+\omega(\delta)+2\sigma^2 c f(\delta)\right),
\end{align*}
i.e.
\begin{align*}
\mathbb{E}\left(\frac{1}{N}\sum_{i=1}^N\left|X^i_t-\bar{X}^i_t\right|\right)\leq& \frac{C_fe^{-(2\sigma^2 c-L_\Gamma D_{N,g})t}}{c_f}\mathbb{E}\left(\frac{1}{N}\sum_{i=1}^N\left|X^i_0-\bar{X}^i_0\right|\right)\\
&+\frac{1}{c_f(2\sigma^2 c-L_\Gamma D_{N,g})}\left(C_0 L_\Gamma\left(\sqrt{\frac{\alpha_N D_{N,g}}{N}}+I_{N,g}\right)+\omega(\delta)+2\sigma^2 c f(\delta)\right)
\end{align*}
Thus, denoting $\bar{\mu}^N_t$ the empirical measure associated with the system of independent non-linear particles $\left((\bar{X}^1_t,\omega_1),...,(\bar{X}^N_t,\omega_N)\right)$, we obtain, for $c_\Gamma=\sigma^2 c$ and $L_\Gamma \leq c_\Gamma/D_{N,g}$,
\begin{align*}
\mathbb{E}\mathcal{W}_1\left(\mu^N_t,\bar{\mu}^N_t\right)\leq& \frac{C_fe^{-\sigma^2 c 
t}}{c_f}\mathbb{E}\left(\frac{1}{N}\sum_{i=1}^N\left|X^i_0-\bar{X}^i_0\right|\right)\\
&+\frac{1}{c_f  \sigma^2 c}\left(C_0 L_\Gamma\left(\sqrt{\frac{\alpha_N D_{N,g}}{N}}+I_{N,g}\right)+\omega(\delta)+2\sigma^2 c f(\delta)\right).
\end{align*}
Notice that 
\begin{align*}
    \mathcal{W}_1\left(\mu^N_0,\bar{\mu}^N_0\right)=&\mathcal{W}_1\left(\frac{1}{N}\sum_{i=1}^N\delta_{(X^i_0,\omega_i)},\frac{1}{N}\sum_{i=1}^N\delta_{(\bar{X}^i_0,\omega_i)}\right)\\
    =&\min_{\tau\text{ permutation}}\left\{\frac{1}{N}\sum_{i=1}^N|X^i_0-\bar{X}^{\tau(i)}_0|+|\omega_i-\omega_{\tau(i)}|\right\}\\
    =&\frac{1}{N}\sum_{i=1}^N|X^i_0-\bar{X}^i_0|,
\end{align*}
as both terms to minimize are minimal for $\tau$ the identity. By having $\delta\rightarrow0$,  we thus have
\begin{align*}
\mathbb{E}\mathcal{W}_1\left(\mu^N_t,\bar{\mu}^N_t\right)\leq& \frac{C_fe^{-\sigma^2 c t}}{c_f}\mathbb{E}\mathcal{W}_1\left(\mu^N_0,\bar{\mu}^N_0\right)\\
&+\frac{1}{c_f \sigma^2 c }\left(C_0L_\Gamma\left(\sqrt{\frac{\alpha_N}{N}}+I_{N,g}\right)\right).
\end{align*}
Since $\left((\bar{X}^1_t,\omega_1),...,(\bar{X}^N_t,\omega_N)\right)$ are N independent random variables with law $\bar{\rho}_t$ by construction, and since $\bar{\rho}_t$ admits a second moment, Theorem~1 of \cite{FG15} yields the existence of a constant $C$, depending only on the dimensions $d$ and $d'$, such that 
\begin{align*}
    \mathbb{E}\mathcal{W}_1\left(\bar{\mu}^N_t,\bar{\rho}_t\right)\leq& C\left(C_{\text{dis}}+\bar{\mathcal{C}}_2\right)^{1/2}\left\{
    \begin{array}{ll}
    N^{-\frac{1}{2}}+N^{-\frac{1}{3}}&\text{ if }d+d'=1,\\
    N^{-\frac{1}{2}}\log(1+N)+N^{-\frac{1}{3}}&\text{ if }d+d'=2,\\
    N^{-\frac{1}{2}}+N^{-\frac{1}{d+d'}}&\text{ if }d+d'\geq3.
    \end{array}
    \right. 
\end{align*}
The convergence rates could be improved (with respective rates $N^{-\frac12}$, $N^{-\frac12 }\log (1+N)$ and $N^{-\frac{1}{d+d'}}$) provided we can prove uniform in time bounds on a moment of order $q>2$ for $\bar{\rho}_t$. This can be done, but requires a similar great moment assumption on the initial distribution $\bar{\rho}_0$.

\appendix

\section{Graph estimates}\label{sec:app}

\begin{lemma}
Let us fix an integer $r$, consider an integer $m$, define the total size of the population $N=mr$, and define independent random variables $\xi^{(N,k,k')}_{i,j}$ for $k,k'\in \{1,\ldots, r\}$ and $i,j\in \{1,\ldots,m\}$ such that $\xi^{(N,k,k)}_{i,j}$ are of Bernoulli distribution with parameter $q^{k,k}_N=q_N$ satisfying $\frac{1}{q_N} = o\left(\frac{N}{\log N}\right)$, while for $k\neq k'$  $\xi^{(N,k,k')}_{i,j}$ are of Bernoulli distribution with parameter $q^{k,k'}_N$ satisfying $q^{k,k'}_N=o(q_N)$. Then, defining $d_i^{(N,k)} =\sum_{k'=1}^r \sum_{j=1}^m  \xi^{(N,k,k')}_{i,j}$ and $\tilde d_i^{(N,k)} =\sum_{k'=1}^r \sum_{j=1}^m  \xi^{(N,k,k')}_{j,i}$, there exists a constant $C$ such that
\begin{align*}
\limsup_{m\rightarrow \infty} \sup_{k\in \{1,\ldots,r\}} \sup_{i\in \{1,\ldots m\}} \frac{1}{q_N}\left(\frac{d^{(N,k)}_i}{N}+ \frac{\tilde d^{(N,k)}_i}{N}\right)\leq C,
\end{align*}
and moreover
\begin{align*}
\sup_{k\in \{1,\ldots,r\}} \sup_{i\in\{1,...,m\}}\left|\frac{d^{(N,k)}_i}{N q_N }-\frac1r\right|\xrightarrow[N\rightarrow\infty]{a.s}0.
\end{align*}
\end{lemma}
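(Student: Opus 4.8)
The plan is to establish the almost-sure uniform convergence $d_i^{(N,k)}/(Nq_N)\to 1/r$ in two steps — first controlling the mean, then the fluctuations via a concentration inequality combined with the Borel--Cantelli lemma — and then to deduce both displayed assertions from it. I would begin by computing the mean. Writing $N=mr$ and recalling that $\xi^{(N,k,k)}_{i,j}$ is Bernoulli$(q_N)$ while $\xi^{(N,k,k')}_{i,j}$ for $k\neq k'$ is Bernoulli$(q^{k,k'}_N)$ with $q^{k,k'}_N=o(q_N)$, one obtains $\mathbb{E}[d_i^{(N,k)}]=mq_N+\sum_{k'\neq k}mq^{k,k'}_N=mq_N(1+o(1))$ (up to a negligible correction if the diagonal is excluded), the $o(1)$ being uniform in $i,k$ since $r$ is fixed. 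Dividing by $Nq_N=mrq_N$ yields $\mathbb{E}[d_i^{(N,k)}]/(Nq_N)\to 1/r$ uniformly.

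Next I would handle the fluctuations. For fixed $\delta>0$, the event $|d_i^{(N,k)}/(Nq_N)-1/r|>\delta$ forces, for $m$ large, $|d_i^{(N,k)}-\mathbb{E}[d_i^{(N,k)}]|>(\delta/2)Nq_N$. Since $d_i^{(N,k)}$ is a sum of independent $\{0,1\}$ variables whose total variance is of order $mq_N$, Bernstein's inequality (equivalently, the multiplicative Chernoff bound) gives $\mathbb{P}(|d_i^{(N,k)}-\mathbb{E}[d_i^{(N,k)}]|>(\delta/2)Nq_N)\leq 2\exp(-c\,mq_N)$ for a constant $c=c(\delta,r)>0$: indeed the deviation $(\delta/2)Nq_N=(\delta r/2)mq_N$ is a constant multiple of the mean $\approx mq_N$, placing us in the large-deviation regime where the exponent is linear in $mq_N$. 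A union bound over the $N=mr$ vertices then yields $\mathbb{P}(\sup_{k,i}|d_i^{(N,k)}/(Nq_N)-1/r|>\delta)\leq 2mr\exp(-cmq_N)$.

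Finally, the hypothesis $1/q_N=o(N/\log N)$ is equivalent to $mq_N/\log m\to\infty$, so for any fixed $A$ one has $mq_N>A\log m$ eventually, whence $\exp(-cmq_N)\leq m^{-cA}$. Choosing $A$ large makes $2mr\exp(-cmq_N)$ summable in $m$, and Borel--Cantelli delivers the almost-sure uniform convergence of $d_i^{(N,k)}/(Nq_N)$ to $1/r$. The in-degree $\tilde d_i^{(N,k)}$ has exactly the same distributional structure (one community contributing $m$ Bernoulli$(q_N)$ variables, the others contributing $o(q_N)$ terms), so the identical argument gives $\tilde d_i^{(N,k)}/(Nq_N)\to 1/r$ almost surely and uniformly; adding the two limits yields the first assertion with $C=2/r$ (any larger constant works as well).

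The main obstacle is the concentration step: one must verify that the sub-dominant inter-community edges perturb neither the mean nor the variance at leading order, and, crucially, that the density condition $1/q_N=o(N/\log N)$ is exactly strong enough to beat the factor $N$ produced by the union bound. This is the heart of the matter, as a sparser graph would make the tail bound $N\exp(-cmq_N)$ fail to be summable and the quenched uniform statement would break down.
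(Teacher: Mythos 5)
Your proposal is correct and follows essentially the same route as the paper's proof: Bernstein's inequality for the centered Bernoulli sums, a union bound over the $N=mr$ vertices, Borel--Cantelli made summable by the density assumption $1/q_N=o(N/\log N)$, and the same reduction of the first claim to the second via the equidistribution of $d_i^{(N,k)}$ and $\tilde d_i^{(N,k)}$. The only cosmetic difference is that you work at a fixed relative deviation $\delta$ (exponent linear in $mq_N$, then $\delta\downarrow 0$ along a countable sequence), whereas the paper centers at the exact mean and takes the shrinking threshold $t=\sqrt{c\log N/(Nq_N)}$, which yields an explicit rate and a single application of Borel--Cantelli.
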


\begin{proof}
We only prove the second estimate, the first one being a consequence of the second claim and the fact that $d^{(N,k)}_i$ and $\tilde d^{(N,k)}_i$ have the same distribution. Remarking that the independent random variables $Z^{(N,k,k')}_{i,j}:=\frac{1}{Nq_N}\left(\xi^{(N,k,k')}_{i,j}-q^{k,k'}_N\right)$ satisfy $\left|Z^{(N,k,k')}_{i,j}\right|\leq \frac{1}{Nq_N}$ and $\mathbb{E}\left[\left|Z^{(N,k,k')}_{i,j}\right|^2\right]\leq \frac{q_N^{k,k'}}{N^2 q_N^2}$, Benrstein inequality leads to
\begin{align*}
    \mathbb{P} \left(\left|\sum_{k'=1}^r \sum_{j=1}^m Z^{(N,k,k')}_{i,j}\right| > t\right)\leq 2 \exp\left(-\frac12 \frac{N q_N t^2}{\sum_{k'=1}^r \frac{m q_N^{k,k'}}{N q_N} +\frac{t}{3}}\right).
\end{align*}
Taking $t=\sqrt{\frac{c\log N}{Nq_N}}$ for some positive constant $c$ and remarking that $q_N^{k,k'}\leq q_N$ and $\frac13 \sqrt{\frac{c\log N}{Nq_N}}\leq 1$ for $N$ large enough and we get
\begin{align*}
    \mathbb{P} \left(\left|\sum_{k'=1}^r \sum_{j=1}^m Z^{(N,k,k')}_{i,j}\right| > \sqrt{\frac{c\log N}{Nq_N}}\right)\leq 2 \exp\left(-\frac12 \frac{c\log N}{\sum_{k'=1}^r \frac{m q_N^{k,k'}}{N q_N} +\frac{1}{3}\sqrt{\frac{c\log N}{Nq_N}}}\right)\leq 2 N^{-\frac{c}{4}}.
\end{align*}
So
\begin{align*}
   \mathbb{P} \left(\sup_{k\in \{1,\ldots,r\}} \sup_{i\in\{1,...,m\}}\left|\frac{d^{(N,k)}_i}{N q_N }-\sum_{k'=1}^r \frac{mq^{k,k'}_N}{N q_N}\right| > \sqrt{\frac{c\log N}{Nq_N}}\right) \leq 2 N^{1-\frac{c}{4}},
\end{align*}
and we conclude by applying Borel-Cantelli Lemma, taking $c$ large enough and noting that $\sum_{k'=1}^r \frac{mq^{k,k'}_N}{N q_N}$ converges to $\frac{1}{r}$ as $N$ goes to infinity (recall that $q_N^{k,k}=q_N$).
\end{proof}

\subsection*{Acknowledgement}

P.LB. and C.P. acknowledge the support of ANR-17-CE40-0030 (EFI). C.P. also acknowledges the support of ANR-19-CE40-0023 (PERISTOCH).

\subsection*{Availability of Data and Materials}
Data sharing not applicable to this article as no datasets were generated or
analysed during the current study.

\subsection*{Conflicts of interest}
The authors declare no conflict of interest.

\nocite{*}
\bibliographystyle{amsplain}
\bibliography{biblio_PoC_graph}

\providecommand{\bysame}{\leavevmode\hbox to3em{\hrulefill}\thinspace}
\providecommand{\MR}{\relax\ifhmode\unskip\space\fi MR }
\providecommand{\MRhref}[2]{%
  \href{http://www.ams.org/mathscinet-getitem?mr=#1}{#2}
}
\providecommand{\href}[2]{#2}
\begin{thebibliography}{10}

\bibitem{bayraktar2020graphon}
Erhan Bayraktar, Suman Chakraborty, and Ruoyu Wu, \emph{Graphon mean field
  systems}, arXiv preprint arXiv:2003.13180 (2020).

\bibitem{BW21}
Erhan Bayraktar and Ruoyu Wu, \emph{Graphon particle system: uniform-in-time
  concentration bounds}, Stochastic Process. Appl. \textbf{156} (2023),
  196--225. \MR{4514837}

\bibitem{Bolley08}
Fran\c{c}ois Bolley, \emph{Separability and completeness for the {W}asserstein
  distance}, S\'{e}minaire de probabilit\'{e}s {XLI}, Lecture Notes in Math.,
  vol. 1934, Springer, Berlin, 2008, pp.~371--377. \MR{2483740}

\bibitem{BJS22}
Didier Bresch, Pierre-Emmanuel Jabin, and Juan Soler, \emph{A new approach to
  the mean-field limit of vlasov-fokker-planck equations}, arXiv preprint
  arXiv:2203.15747 (2022).

\bibitem{BJW19}
Didier Bresch, Pierre-Emmanuel Jabin, and Zhenfu Wang, \emph{Modulated free
  energy and mean field limit}, S{\'e}minaire Laurent Schwartz—EDP et
  applications (2019), 1--22.

\bibitem{CGM08}
Patrick Cattiaux, Arnaud Guillin, and Florent Malrieu, \emph{Probabilistic
  approach for granular media equations in the non-uniformly convex case},
  Probab. Theory Related Fields \textbf{140} (2008), no.~1-2, 19--40.
  \MR{2357669}

\bibitem{CD22-1}
Louis-Pierre Chaintron and Antoine Diez, \emph{Propagation of chaos: {A} review
  of models, methods and applications. {I}. {M}odels and methods}, Kinet.
  Relat. Models \textbf{15} (2022), no.~6, 895--. \MR{4489768}

\bibitem{CD22-2}
\bysame, \emph{Propagation of chaos: {A} review of models, methods and
  applications. {II}. {A}pplications}, Kinet. Relat. Models \textbf{15} (2022),
  no.~6, 1017--. \MR{4489769}

\bibitem{CDG20}
Fabio Coppini, Helge Dietert, and Giambattista Giacomin, \emph{A law of large
  numbers and large deviations for interacting diffusions on
  erd{\H{o}}s--r{\'e}nyi graphs}, Stochastics and Dynamics \textbf{20} (2020),
  no.~02, 2050010.

\bibitem{DT19}
Pierre Del~Moral and Julian Tugaut, \emph{Uniform propagation of chaos and
  creation of chaos for a class of nonlinear diffusions}, Stoch. Anal. Appl.
  \textbf{37} (2019), no.~6, 909--935. \MR{4020054}

\bibitem{DT21}
Fran{\c{c}}ois Delarue and Alvin Tse, \emph{Uniform in time weak propagation of
  chaos on the torus}, arXiv preprint arXiv:2104.14973 (2021).

\bibitem{DGL16}
Sylvain Delattre, Giambattista Giacomin, and Eric Lu\c{c}on, \emph{A note on
  dynamical models on random graphs and {F}okker-{P}lanck equations}, J. Stat.
  Phys. \textbf{165} (2016), no.~4, 785--798. \MR{3568168}

\bibitem{DEGZ20}
Alain Durmus, Andreas Eberle, Arnaud Guillin, and Raphael Zimmer, \emph{An
  elementary approach to uniform in time propagation of chaos}, Proc. Amer.
  Math. Soc. \textbf{148} (2020), no.~12, 5387--5398. \MR{4163850}

\bibitem{Eberle16}
Andreas Eberle, \emph{Reflection couplings and contraction rates for
  diffusions}, Probab. Theory Related Fields \textbf{166} (2016), no.~3-4,
  851--886. \MR{3568041}

\bibitem{EGZ19}
Andreas Eberle, Arnaud Guillin, and Raphael Zimmer, \emph{Quantitative
  {H}arris-type theorems for diffusions and {M}c{K}ean-{V}lasov processes},
  Trans. Amer. Math. Soc. \textbf{371} (2019), no.~10, 7135--7173. \MR{3939573}

\bibitem{FG15}
Nicolas Fournier and Arnaud Guillin, \emph{On the rate of convergence in
  {W}asserstein distance of the empirical measure}, Probab. Theory Related
  Fields \textbf{162} (2015), no.~3-4, 707--738. \MR{3383341}

\bibitem{GLBM21-2}
Arnaud Guillin, Pierre Le~Bris, and Pierre Monmarch\'e, \emph{Uniform in time
  propagation of chaos for the 2d vortex model and other singular stochastic
  systems}, arXiv preprint arXiv:2108.08675 (2021).

\bibitem{GLBM22}
Arnaud Guillin, Pierre Le~Bris, and Pierre Monmarch\'{e}, \emph{Convergence
  rates for the {V}lasov-{F}okker-{P}lanck equation and uniform in time
  propagation of chaos in non convex cases}, Electron. J. Probab. \textbf{27}
  (2022), --. \MR{4489825}

\bibitem{GLBM22-2}
Arnaud Guillin, Pierre Le~Bris, and Pierre Monmarch\'e, \emph{On systems of
  particles in singular repulsive interaction in dimension one: log and riesz
  gas}, arXiv preprint arXiv:2204.10653 (2022).

\bibitem{GLWZ21}
Arnaud Guillin, Wei Liu, Liming Wu, and Chaoen Zhang, \emph{The kinetic
  {F}okker-{P}lanck equation with mean field interaction}, J. Math. Pures Appl.
  (9) \textbf{150} (2021), 1--23. \MR{4248461}

\bibitem{GLWZ22}
\bysame, \emph{Uniform {P}oincar\'{e} and logarithmic {S}obolev inequalities
  for mean field particle systems}, Ann. Appl. Probab. \textbf{32} (2022),
  no.~3, 1590--1614. \MR{4429996}

\bibitem{GM21}
Arnaud Guillin and Pierre Monmarch\'{e}, \emph{Uniform long-time and
  propagation of chaos estimates for mean field kinetic particles in non-convex
  landscapes}, J. Stat. Phys. \textbf{185} (2021), no.~2, Paper No. 15, 20.
  \MR{4333408}

\bibitem{HT10}
Samuel Herrmann and Julian Tugaut, \emph{Non-uniqueness of stationary measures
  for self-stabilizing processes}, Stochastic Process. Appl. \textbf{120}
  (2010), no.~7, 1215--1246. \MR{2639745}

\bibitem{JW18}
Pierre-Emmanuel Jabin and Zhenfu Wang, \emph{Quantitative estimates of
  propagation of chaos for stochastic systems with {$W^{-1,\infty}$} kernels},
  Invent. Math. \textbf{214} (2018), no.~1, 523--591. \MR{3858403}

\bibitem{Kac56}
Mark Kac, \emph{Foundations of kinetic theory}, Proceedings of the {T}hird
  {B}erkeley {S}ymposium on {M}athematical {S}tatistics and {P}robability,
  1954--1955, vol. {III}, University of California Press, Berkeley-Los Angeles,
  Calif., 1956, pp.~171--197. \MR{0084985}

\bibitem{Lac21}
Daniel Lacker, \emph{Hierarchies, entropy, and quantitative propagation of
  chaos for mean field diffusions}, arXiv preprint arXiv:2105.02983 (2021).

\bibitem{LLF22}
Daniel Lacker and Luc Le~Flem, \emph{Sharp uniform-in-time propagation of
  chaos}, arXiv preprint arXiv:2205.12047 (2022).

\bibitem{luccon2020quenched}
Eric Lu{\c{c}}on, \emph{Quenched asymptotics for interacting diffusions on
  inhomogeneous random graphs}, Stochastic Processes and their Applications
  \textbf{130} (2020), no.~11, 6783--6842.

\bibitem{Mal01}
Florent Malrieu, \emph{Logarithmic {S}obolev inequalities for some nonlinear
  {PDE}'s}, Stochastic Process. Appl. \textbf{95} (2001), no.~1, 109--132.
  \MR{1847094}

\bibitem{Mal03}
\bysame, \emph{Convergence to equilibrium for granular media equations and
  their {E}uler schemes}, Ann. Appl. Probab. \textbf{13} (2003), no.~2,
  540--560. \MR{1970276}

\bibitem{Meleard96}
Sylvie M\'{e}l\'{e}ard, \emph{Asymptotic behaviour of some interacting particle
  systems; {M}c{K}ean-{V}lasov and {B}oltzmann models}, Probabilistic models
  for nonlinear partial differential equations ({M}ontecatini {T}erme, 1995),
  Lecture Notes in Math., vol. 1627, Springer, Berlin, 1996, pp.~42--95.
  \MR{1431299}

\bibitem{Mon17}
Pierre Monmarch\'{e}, \emph{Long-time behaviour and propagation of chaos for
  mean field kinetic particles}, Stochastic Process. Appl. \textbf{127} (2017),
  no.~6, 1721--1737. \MR{3646428}

\bibitem{OR19}
Roberto~I Oliveira and Guilherme~H Reis, \emph{Interacting diffusions on random
  graphs with diverging average degrees: Hydrodynamics and large deviations},
  Journal of Statistical Physics \textbf{176} (2019), no.~5, 1057--1087.

\bibitem{COTFNT}
Gabriel Peyr\'{e} and Marco Cuturi, \emph{Computational optimal transport},
  Foundations and Trends in Machine Learning \textbf{11} (2019), no.~5-6,
  355--607.

\bibitem{RS21}
Matthew Rosenzweig and Sylvia Serfaty, \emph{Global-in-time mean-field
  convergence for singular {R}iesz-type diffusive flows}, Ann. Appl. Probab.
  \textbf{33} (2023), no.~2, 754--798. \MR{4564418}

\bibitem{Sal20}
Samir Salem, \emph{A gradient flow approach of propagation of chaos}, Discrete
  Contin. Dyn. Syst. \textbf{40} (2020), no.~10, 5729--5754. \MR{4128323}

\bibitem{Schuh22}
Katharina Schuh, \emph{Global contractivity for langevin dynamics with
  distribution-dependent forces and uniform in time propagation of chaos},
  2022, arXiv:2206.03082.

\bibitem{Ser20}
Sylvia Serfaty, \emph{Mean field limit for {C}oulomb-type flows}, Duke Math. J.
  \textbf{169} (2020), no.~15, 2887--2935, With an appendix by Mitia Duerinckx
  and Serfaty. \MR{4158670}

\bibitem{Sznitman91}
Alain-Sol Sznitman, \emph{Topics in propagation of chaos}, \'{E}cole
  d'\'{E}t\'{e} de {P}robabilit\'{e}s de {S}aint-{F}lour {XIX}---1989, Lecture
  Notes in Math., vol. 1464, Springer, Berlin, 1991, pp.~165--251. \MR{1108185}

\end{thebibliography}

\end{document}